\documentclass[11pt,a4paper]{amsart}
\usepackage[T1]{fontenc}
\usepackage{lmodern}
\usepackage{amsmath,amssymb,amsthm,mathtools}
\usepackage{microtype}
\usepackage[margin=30mm]{geometry}
\usepackage[colorlinks=true,linkcolor=blue,citecolor=blue,urlcolor=blue]{hyperref}
\usepackage[alphabetic]{amsrefs}
\numberwithin{equation}{section}
\allowdisplaybreaks[2]
\newtheorem{theorem}{Theorem}[section]
\newtheorem{prop}[theorem]{Proposition}
\newtheorem{lemma}[theorem]{Lemma}

\newtheorem{definition}[theorem]{Definition}
\newtheorem{cor}[theorem]{Corollary}
\newtheorem{example}[theorem]{Example}

\newcommand{\R}{\mathbb R}
\newcommand{\dd}{\,\mathrm{d}}
\DeclareMathOperator{\Vol}{Vol}

\DeclareMathOperator{\AVR}{AVR}
\DeclareMathOperator{\rec}{rec}
\DeclareMathOperator{\dist}{dist}

\title[Curvature integral on convex hypersurfaces]
{Curvature integral on convex hypersurfaces}
\author{Qixuan Hu}
\address{Department of Mathematics and Computer Sciences,
Shantou University, Guangdong, P. R. China}
\email{qxhu@stu.edu.cn}
\thanks{The first author was supported by SRIG under Grant NTF25026T}
\author{Shuai Zhang}
\address{Department of Mathematical Sciences, Tsinghua University, Beijing, P. R. China}
\email{zhangshu22@mails.tsinghua.edu.cn}
\date{\today}
\subjclass[2020]{Primary 53C21; Secondary 52A20, 53C42}
\keywords{Convex hypersurface, scalar curvature integral, tube volume,
recession cone}

\begin{document}
\begin{abstract}
Let $M^n$, $n\ge3$, be the smooth, connected, complete, noncompact
boundary of a closed convex set with nonempty interior in $\R^{n+1}$.
We prove that its normalized scalar-curvature integral at infinity is
at most $4\pi\omega_{n-2}(1-\AVR(M))$, except when, up to an ambient
rigid motion, $M$ is the product of a smooth closed convex surface
in $\R^3$ and $\R^{n-2}$. These exceptional products have zero
asymptotic volume ratio and limit $8\pi\omega_{n-2}$.
We also characterize the equality case for the $4\pi\omega_{n-2}(1-\AVR(M))$ bound.
The proof uses exterior tube volumes together with the cone
Steiner and Gauss--Bonnet formulas. An elementary comparison of
intrinsic and Euclidean radial distances proves existence of the
intrinsic limit and identifies its exact value from the cone
coefficients. The same conclusions extend to  orientable convex submanifolds of arbitrary codimension
\end{abstract}
\maketitle

\section{Introduction}

The study of curvature integrals on complete noncompact manifolds has its origin in the classical theory of total curvature of surfaces.  Let $(\Sigma,h)$ be a complete noncompact Riemannian surface and let $K_h$ denote its Gaussian curvature.  Under the finite-total-curvature assumption, the Cohn--Vossen inequality gives
\[
    \int_{\Sigma} K_h\,dA_h
    \le 2\pi \chi(\Sigma),
\]
see \cite{CohnVossen1935}.  A more precise relation between total curvature and the geometry at infinity was obtained by Hartman \cite{Hartman1964}.  In terms of the scalar curvature $R_h=2K_h$, one has
\[
    \int_{\Sigma} R_h\,dA_h
    =
    4\pi\bigl(\chi(\Sigma)-\mathrm{AVR}(\Sigma,h)\bigr),
\]
under the corresponding integrability assumptions, where
\[
    \mathrm{AVR}(M,g)
    :=
    \lim_{r\to\infty}
    \frac{\operatorname{Vol}_g(B_p(r))}
         {\omega_n r^n}
\]
denotes the asymptotic volume ratio.  Thus, in dimension two, the total curvature records topological information and the asymptotic loss of Euclidean volume growth.

This picture naturally suggests higher-dimensional analogues.  Indeed, the quantity
\[
    r^{2-n}\int_{B_p(r)} R_g\,dV_g
\]
is invariant under a rescaling of the metric and the radius. Motivated by the two-dimensional total-curvature theory, Yau asked in \cite[Problem 9]{Yau1992} for higher-dimensional estimates of normalized integrals of the elementary symmetric functions of the Ricci tensor. In particular, on a complete noncompact manifold with nonnegative Ricci curvature, we wonder if
\[
    \limsup_{r\to\infty}
    r^{2-n}\int_{B_p(r)}R_g\,dV_g
    <\infty.
\]

In dimension three, Xu \cite{Xu2020} proved a weighted scalar-curvature integral estimate on nonparabolic manifolds with nonnegative Ricci curvature, formulated in terms of the Green function.  For manifolds with a pole, an exact unweighted formula was subsequently obtained in \cite{Xu2024}:
\[
    \lim_{r\to\infty}
    \frac{1}{r}
    \int_{B_p(r)}R_g\,dV_g
    =
    8\pi\bigl(1-\mathrm{AVR}(M,g)\bigr).
\]
In dimension three, the sharp asymptotic upper bound $8\pi$ were obtained by Munteanu and Wang \cite{MunteanuWang} when the scalar curvature is bounded above and below by positive constants. Cohn--Vossen-type results under additional geometric structures or decay assumptions were established by Deng \cite{Deng2026} and Ma \cite{Ma2026}.

On the other hand, the conjecture under the sole assumption $\operatorname{Ric}\ge0$ without further restrictions is false: counterexamples with divergent normalized scalar-curvature integral were recently constructed in \cites{XuCollapse2026,HaoZhu2026,Cheng2026}.  These examples indicate that some additional geometric structure is necessary for a sharp higher-dimensional theory.

A substantially different picture emerges under nonnegative sectional curvature.  In recent work, Xu \cite{Xu2026Sharp} proved, among other results, that for every complete noncompact manifold with nonnegative sectional curvature the limit
\[
    L(M,g)
    :=
    \lim_{r\to\infty}
    r^{2-n}\int_{B_p(r)}R_g\,dV_g
\]
exists and is finite.  When the manifold has a pole, the sharp estimate
\[
    L(M,g)
    \le
    4\pi\omega_{n-2}
    \bigl(1-\mathrm{AVR}(M,g)\bigr)
\]
holds in every dimension.  For general nonnegatively curved manifolds, sharp estimates and rigidity results were established in dimensions $3\le n\le6$, with the geometry of the soul playing an essential role.
The first author \cite{Hu2026} also obtained exact scalar-curvature
limit formulas for nonnegatively curved three-manifolds.

The purpose of the present paper is to study this problem in the setting of convex hypersurfaces.  Let
\[
    M^n=\partial K\subset\mathbb{R}^{n+1},
    \qquad n\ge3,
\]
be the smooth, connected, complete and noncompact boundary of a closed convex set $K$ with nonempty interior.  If $A$ denotes the shape operator with respect to the outward unit normal, convexity gives $A\ge0$, and the Gauss equation yields
\[
    \sec_M\ge0,
    \qquad
    R=2\sigma_2(A)\ge0.
\]
Thus convex hypersurfaces form a subclass of complete manifolds with nonnegative sectional curvature.  The Euclidean embedding carries additional information that is invisible from the intrinsic metric alone.  Our main observation is that this extrinsic information can be used to convert the asymptotic curvature-integral problem into a problem in convex geometry.

For $p\in M$, set
\[
    L(M)
    :=
    \lim_{r\to\infty}
    r^{2-n}
    \int_{B_p^M(r)}R\,dV,
    \qquad
    V_M:=\mathrm{AVR}(M).
\]
We prove first that the above limit always exists, is finite, and is independent of the base point.  Moreover, except for one explicitly determined family of cylindrical products, we obtain the sharp estimate
\[
    L(M)
    \le
    4\pi\omega_{n-2}(1-V_M).
\]
The exceptional case occurs precisely when, after an ambient rigid motion,
\[
    K=D\times\mathbb{R}^{n-2},
    \qquad
    M=\partial D\times\mathbb{R}^{n-2},
\]
where $D\subset\mathbb{R}^3$ is a compact convex body with smooth boundary.  In this case
\[
    V_M=0,
    \qquad
    L(M)=8\pi\omega_{n-2}.
\]
In particular, the coefficient $4\pi\omega_{n-2}$, which in the general nonnegatively curved setting is naturally associated with the existence of a pole, persists for convex hypersurfaces in every dimension without any pole assumption, apart from this completely classified product obstruction.

We also characterize the equality case in the estimate.  Equality holds precisely when either $(M,g)$ is isometric to Euclidean space or, after an ambient rigid motion,
\[
    M=\partial D_0\times\mathbb{R}^k,
    \qquad
    n-3\le k\le n-2,
\]
where $D_0\subset\mathbb{R}^{n+1-k}$ is a closed noncompact convex set with nonempty interior containing no complete affine line.

The proof is based on the recession cone of $K$,
\[
    C=\operatorname{rec}(K),
\]
which captures the asymptotic geometry of the convex body.  After translating the base point to the origin and rescaling
\[
    K_r=r^{-1}K,
\]
the sets $K_r$ decrease to $C$ as $r\to\infty$.  For a closed convex set $Q\subset\mathbb{R}^{n+1}$, we consider the volume $T_Q(t)$ of the portion of its exterior $t$-tube whose nearest-point projection lies in the unit ball. We first calculate $T_{K_r}(t)$, which turns out to be a polynomial, with coefficients of $t$ and $t^3$ equal to
\[
    r^{-n}\operatorname{Vol}(M\cap B_r)
    \quad\text{and}\quad
    \frac16\,r^{2-n}
    \int_{M\cap B_r}R\,dV
\]
respectively. The convergence $K_r\to C$ then implies convergence of the corresponding tube volumes and hence of all their coefficients. $T_C(t)$ is then a polynomial with coefficients of $t$ and $t^3$ equal to $V_M$ and $L(M)$ respectively.

Now we want to calculate $T_C(t)$. When the recession cone is not a linear subspace, the Steiner formula for convex cones \cite{McCoyTropp} identifies these coefficients with the conic intrinsic volumes $v_j(C)$.  More precisely, the relevant coefficients give
\[
    V_M=2v_n(C),
    \qquad
    L(M)=8\pi\omega_{n-2}v_{n-2}(C).
\]
The conic Gauss--Bonnet relation then yields the exact deficit formula
\[
\begin{split}
    &4\pi\omega_{n-2}(1-V_M)-L(M)
    \\
    &\qquad =
    8\pi\omega_{n-2}
    \sum_{\substack{0\le j\le n-4\\ j\equiv n\;(\mathrm{mod}\,2)}}
    v_j(C)
    \ge0.
\end{split}
\]
This identity not only proves the sharp inequality but also reduces its rigidity to the vanishing pattern of the conic intrinsic volumes.  The latter is determined by the linear space
\[
    C\cap(-C),
\]
leading to the equality classification above.

The remaining case, where $C$ is a linear subspace, has a different geometric character.  Convexity gives a global splitting
\[
    K=D\times\mathbb{R}^k,
    \qquad
    M=\partial D\times\mathbb{R}^k,
\]
with $D$ compact.  A direct product computation shows that the normalized scalar-curvature integral vanishes unless $k=n-2$.  In the latter case $\partial D$ is a closed convex surface in $\mathbb{R}^3$, and the Gauss--Bonnet theorem gives
\[
    \int_{\partial D}R\,dV=8\pi,
\]
which is exactly responsible for the exceptional value
$L(M)=8\pi\omega_{n-2}$.

Finally, by combining the hypersurface result with Jonker's structure theorem for complete convex immersions \cite{Jonker}, we extend the same conclusions to orientable convex submanifolds of arbitrary codimension: the nonflat case reduces to a convex hypersurface in an affine $(m+1)$-dimensional subspace, while the remaining alternative is intrinsically flat.

\section{Statement and notation}

Let $K\subset\R^{n+1}$ be a closed convex set with nonempty interior,
and suppose that $M=\partial K$ is smooth, connected, complete, and
noncompact. The metric on $M$ is induced from the ambient
Euclidean metric. 
For the outward unit normal $\nu$ of $M$, define the
shape operator $A(X)=D_X\nu$, which is nonnegative by convexity.
Write $\sigma_i(A)$ for the $i$-th
elementary symmetric polynomial of its eigenvalues, with $\sigma_0=1$.
The Gauss equation implies that the sectional curvature of $M$ is
nonnegative, and the scalar curvature $R$ on $M$ satisfies
\begin{equation}\label{eq:gauss}
 R=2\sigma_2(A)\ge0.
\end{equation}

Let $\omega_k$ be the volume of the unit ball in
$\R^k$, with $\omega_0=1$. 

For $p\in M$, distinguish the intrinsic and extrinsic balls by
\[
 B_p^M(r)=\{x\in M:d_M(p,x)<r\},\qquad
 E_p(r)=M\cap B_p^{\R^{n+1}}(r).
\]
Bishop--Gromov's volume comparison theorem gives
the basepoint-independent asymptotic volume ratio
\[
 V_M:=\AVR(M)=\lim_{r\to\infty}
 \frac{\Vol_M(B_p^M(r))}{\omega_n r^n}\in[0,1].
\]

\begin{theorem}\label{thm:main}
Under these hypotheses, for every $n\ge3$ the limit
\[
 L(M):=\lim_{r\to\infty}r^{2-n}\int_{B_p^M(r)}R\dd V
\]
exists, is finite, and is independent of $p$.
One has
\begin{equation}\label{eq:strong-bound}
 L(M)\le4\pi\omega_{n-2}(1-V_M),
\end{equation}
except when, after an ambient rigid motion,
\begin{equation}\label{eq:exception-product}
	K=D\times\R^{n-2},\qquad M=\partial D\times\R^{n-2},
\end{equation}
where $D\subset\R^3$ is a compact convex set with nonempty interior
and smooth boundary. 
In the exceptional case \eqref{eq:exception-product},
\begin{equation}\label{eq:exception-values}
 V_M=0,\qquad L(M)=8\pi\omega_{n-2}.
\end{equation}

Equality in \eqref{eq:strong-bound} holds precisely in one of the following cases:
\begin{enumerate}
\item
$(M,g)$ is isometric to $\mathbb{R}^n$.

\item
After an ambient rigid motion,
\[
M
=
\partial D_0\times\mathbb{R}^k
\subset
\mathbb{R}^{n+1},
\]
where $n-3\leq k\leq n-2$ and $D_0\subset\mathbb{R}^{n+1-k}$ is a closed noncompact convex set with nonempty interior that contains no complete affine line.
\end{enumerate}
\end{theorem}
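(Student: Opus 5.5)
The plan follows the strategy outlined in the introduction. Translate so that $p=0\in M$. For a closed convex set $Q\subset\R^{n+1}$ let $\pi_Q$ be the nearest-point projection, and define
\[
T_Q(t)=\Vol\{x\notin Q:\ 0<\dist(x,Q)\le t,\ \pi_Q(x)\in B(1)\}.
\]
\emph{Step one.} For $Q=K_r=r^{-1}K$ with smooth boundary, use the normal exponential map $(y,s)\mapsto y+s\nu(y)$ on $\partial K_r\cap B(1)$. Its Jacobian is $\det(I+sA)=\sum_i s^i\sigma_i(A)$. Integrating over $s\in[0,t]$ gives a polynomial in $t$ whose coefficient of $t$ is $\Vol(\partial K_r\cap B(1))=r^{-n}\Vol(E_0(r))$. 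Its coefficient of $t^3$ is $\frac13\int\sigma_2(A_r)=\frac16 r^{2-n}\int_{E_0(r)}R$.

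\emph{Step two.} As $r\to\infty$, $K_r\downarrow C=\rec(K)$ in the local Hausdorff sense. Projections converge locally uniformly, so $T_{K_r}(t)\to T_C(t)$ for each $t$. These are polynomials of degree $\le n+1$ with uniformly bounded coefficients. Pointwise convergence at $n+2$ values of $t$ therefore forces convergence of each coefficient. A care point arises from boundary effects at $\partial B(1)$: the set of $y$ with $\pi(y)\in B(1)$ is discontinuous in $Q$ only on a null set, which I handle by sandwiching $B(1\pm\varepsilon)$ and using monotonicity in the radius.

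This yields extrinsic limits of $r^{-n}\Vol E_0(r)$ and $r^{2-n}\int_{E_0(r)}R$. To pass to intrinsic balls, I compare $d_M$ with $|x|$ on a convex hypersurface. One always has $|x|\le d_M(0,x)$. In the converse direction, the shortest path on $\partial K$ between points at distance $\rho$ has length $\le\rho(1+o(1))$ as $\rho\to\infty$, measured relative to scale. This holds because $\partial K_r\to\partial C$ and geodesic distance on the boundary of a convex set is controlled by the perimeter of planar sections: the boundary length of the convex body $K\cap P$, for the 2-plane $P$ through $0$ and $x$, is at most $\pi$ times its diameter. This does not give ratio $1$, so I expect to need more work here. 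I would first try a monotonicity argument: $R\ge0$, and
\[
E_0(r)\subset B^M_0(r)\subset E_0(\lambda r)
\]
after rescaling. Intrinsic balls $B^M_0(r)/r$ converge to intrinsic balls of $\partial C$ (when $C$ is not a subspace, or otherwise on $\partial C$ with the limiting metric), and on a cone intrinsic and extrinsic radial distances agree from the apex. The limit is then identified from the polynomial coefficients, giving basepoint independence since $\rec$ does not depend on the base point. \textbf{This comparison step is the main obstacle}, especially in the degenerate case $C$ a subspace, where the rescaled limit collapses and intrinsic distance to $0$ on $\partial C$ is undefined.

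\emph{Step three (cone computation).} If $C$ is not a linear subspace, apply the conic Steiner formula of McCoy--Tropp. The exterior tube over $C\cap B(1)$ decomposes along faces, giving
\[
T_C(t)=\sum_j c_{j}(t)\,v_j(C),
\]
with explicit coefficients. The boundary of $C$ has dimension $n$, and its two-sidedness in the $t$ coefficient contributes $V_M=2v_n(C)$. Reading off $t^3$ gives $L(M)=8\pi\omega_{n-2}v_{n-2}(C)$. The conic Gauss--Bonnet identities $\sum_{j\text{ even}}v_j=\sum_{j\text{ odd}}v_j=\tfrac12$ then give the displayed deficit formula, hence \eqref{eq:strong-bound}. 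Equality holds iff $v_j(C)=0$ for all $j\le n-4$ with $j\equiv n$. Since $v_j(C)>0$ exactly for $\dim(C\cap -C)\le j\le\dim\operatorname{span}C$, equality means either that the lineality space has dimension $\ge n-3$, or that $C$ is a half-space, i.e.\ $V_M=1$ and $M$ is flat $\R^n$. A lineality space $C\cap -C=\R^k$ splits $K=D_0\times\R^k$, with $D_0$ line-free and noncompact. $k\le n-2$ because $C$ is not a subspace. This produces case (2).

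\emph{Step four.} If $C$ is a subspace $\R^k$, then $K=D\times\R^k$ with $D$ compact, so $V_M=0$. The integral $\int_{B^M(r)}R$ equals $\int_{\partial D}R_{\partial D}\,\Vol(\text{slices})\sim c\,r^k$, which after the factor $r^{2-n}$ vanishes unless $k=n-2$. For $k=n-2$, Gauss--Bonnet gives $L=8\pi\omega_{n-2}$. In this case the intrinsic–extrinsic comparison is direct, since $\partial D$ is compact.
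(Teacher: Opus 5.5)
Your outline follows the paper's architecture: the tube polynomial for $K_r$, $K_r\downarrow C=\rec(K)$ with coefficient convergence by interpolation, the McCoy--Tropp Steiner formula, conic Gauss--Bonnet, the vanishing pattern of $v_j(C)$, and the product computation. However, the step you flag as the main obstacle is a genuine gap, and it is load-bearing.

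The trivial inclusion is $B_0^M(r)\subset E_0(r)$, from $|x|\le d_M(0,x)$; your displayed sandwich has the inclusions reversed. What you need is $E_0\bigl(r/(1+\varepsilon)\bigr)\subset B_0^M(r)$ for every $\varepsilon>0$ and all large $r$, i.e.\ $d_M(0,x)/|x|\to1$.

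\begin{itemize}
\item The trivial inclusion together with $R\ge0$ gives only $\limsup_{r\to\infty}r^{2-n}\int_{B_0^M(r)}R\le6c_3$ and $V_M\le c_1/\omega_n=2v_n(C)$. With the deficit formula this already bounds that $\limsup$ by $4\pi\omega_{n-2}(1-V_M)$.
\item Existence of the limit, the exact identities $V_M=2v_n(C)$ and $L(M)=8\pi\omega_{n-2}v_{n-2}(C)$, and hence the ``if'' direction of the equality classification all need the lower inclusion with $\varepsilon\to0$.
\item A fixed-factor bound $d_M(0,x)\le\lambda|x|+c$, such as your planar-perimeter estimate, only gives $\lambda^{2-n}\cdot6c_3\le\liminf$.
\item Your fallback, convergence of the intrinsic metrics of $\partial K_r$ to an intrinsic metric on $\partial C$, is asserted rather than proved. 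It is delicate exactly when $C$ has empty interior, e.g.\ $C$ a ray for a paraboloid.
\end{itemize}

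The missing idea is elementary. Let $x_j\in M$ with $|x_j|\to\infty$ and $u_j=x_j/|x_j|\to u$. Since $u_j\in K_{|x_j|}$ and the $K_s$ decrease, $u\in\bigcap_sK_s=\rec(K)$, so $x_j+u\in K$. Test the supporting-hyperplane inequality at $x_j$ against $0\in K$ and against $x_j+u\in K$. This gives $\langle u_j,\nu_j\rangle\ge0\ge\langle u,\nu_j\rangle$, hence $0\le\langle u_j,\nu_j\rangle\le|u_j-u|\to0$. With $\rho=|x|$ this means $|\nabla_M\rho(x_j)|^2=1-\langle u_j,\nu_j\rangle^2\to1$, so $|\nabla_M\rho|\to1$ uniformly at infinity. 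Now flow along $-\nabla_M\rho/|\nabla_M\rho|^2$ from $x$ down to the level $\{\rho=T\}$. This yields $d_M(0,x)\le\max_{|y|\le T}d_M(0,y)+(|x|-T)/(1-\delta)$, with no case distinction on $C$.

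There is also a smaller error in your equality analysis. The claim ``$k\le n-2$ because $C$ is not a subspace'' is false: a non-subspace cone can have lineality dimension $n-1$ or $n$. For example, $K=\{(x,y)\in\R^2:y\ge x^2\}\times\R^{n-1}$ has recession cone a half-hyperplane, not a half-space. So your extra ``half-space'' alternative does not cover it, and that alternative is in any case already contained in $\ell\ge n-3$. The correct split is:
\begin{itemize}
\item $k\in\{n-1,n\}$ makes $\partial D_0$ a line or a point, so $M\cong\R^n$ and this falls under case (1);
\item $n-3\le k\le n-2$ is what remains for case (2).
\end{itemize}
You should also state that the subspace case never gives equality, since there $V_M=0$ and $L(M)\in\{0,8\pi\omega_{n-2}\}$.
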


Xu proves existence of the normalized intrinsic limit more generally
for complete noncompact manifolds of nonnegative sectional curvature
\cite[Theorem 4.1, equation (4.4); see also Theorem B]{Xu2026Sharp}.
Here we obtain existence directly: intrinsic and Euclidean radial
distances are asymptotically equal, so the exterior tube coefficients
give the intrinsic volume ratio and curvature integral exactly.
This also allows us to characterize equality in
\eqref{eq:strong-bound}. 

\section{Exterior tubes and their limits}

For a nonempty closed convex set $Q\subset\R^{n+1}$, let $\Pi_Q(y)$
denote the unique nearest point of $Q$ to $y$. For $t>0$, let
\begin{equation}\label{eq:tube-definition}
 U_Q(t)=\{y\in\R^{n+1}:0<\dist(y,Q)<t,\ |\Pi_Q(y)|<1\}
\end{equation}
be the part of the exterior tube whose nearest point
lies in the open unit ball.
Set
\begin{equation}
 T_Q(t)=\Vol_{n+1}(U_Q(t)).
\end{equation}
Note that $U_Q(t)\subset B_0^{\R^{n+1}}(1+t)$, so $T_Q(t)<\infty$.

Fix $p\in M$. by translation we may assume that $p=0$.
Write $B_r=B_0^{\R^{n+1}}(r)$ and set $K_r=r^{-1}K$.

\begin{lemma}\label{lem:tube-polynomial}
For every $r,t>0$,
\begin{equation}\label{eq:tube-polynomial}
 T_{K_r}(t)=\sum_{q=0}^{n}\frac{t^{q+1}}{q+1}
 r^{q-n}\int_{M\cap B_r}\sigma_q(A)\dd V.
\end{equation}
In particular, the coefficients of $t$ and $t^3$ are respectively
\begin{equation}\label{eq:two-coefficients}
 r^{-n}\Vol_M(M\cap B_r),\qquad
 \frac16r^{2-n}\int_{M\cap B_r}R\dd V.
\end{equation}
\end{lemma}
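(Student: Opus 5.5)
We plan to parametrize the exterior tube by the normal exponential map of $M_r=\partial K_r=r^{-1}M$. Because $K_r$ is a closed convex set with smooth boundary, every $y\notin K_r$ has a unique nearest point $x=\Pi_{K_r}(y)$. This point lies on $M_r$, and $y=x+s\nu(x)$ with $s=\dist(y,K_r)>0$. Hence the map
\[
\Phi:\{(x,s):x\in M_r,\ |x|<1,\ 0<s<t\}\to U_{K_r}(t),\qquad \Phi(x,s)=x+s\nu(x),
\]
is a bijection. It is smooth, and it is injective even though it need not be a diffeomorphism onto an open set in general. Since $D\Phi$ is invertible wherever $1+s\kappa_i>0$, and $\kappa_i\ge0$ by convexity, the map is a diffeomorphism. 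This is the point where convexity is used: there is no focal set in the exterior.

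Next we compute the Jacobian. In a principal frame at $x$, $D\Phi(e_i)=(1+s\kappa_i)e_i$ and $D\Phi(\partial_s)=\nu$. The Jacobian is therefore $\prod_i(1+s\kappa_i)=\sum_{q=0}^n s^q\sigma_q(A_r)$, where $A_r$ is the shape operator of $M_r$. The change of variables formula then gives
\[
T_{K_r}(t)=\int_{M_r\cap B_1}\int_0^t\sum_{q}s^q\sigma_q(A_r)\dd s\dd V_r=\sum_{q=0}^n\frac{t^{q+1}}{q+1}\int_{M_r\cap B_1}\sigma_q(A_r)\dd V_r.
\]

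It remains to undo the scaling. The dilation $x\mapsto rx$ maps $M_r\cap B_1$ onto $M\cap B_r$. Under it the $n$-dimensional volume element satisfies $\dd V_r=r^{-n}\dd V$, and the principal curvatures satisfy $A_r=rA$, so $\sigma_q(A_r)=r^q\sigma_q(A)$. Substituting these gives \eqref{eq:tube-polynomial}.

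The coefficient of $t$ comes from $q=0$ and equals $r^{-n}\Vol_M(M\cap B_r)$. The coefficient of $t^3$ comes from $q=2$ and equals $\tfrac13 r^{2-n}\int\sigma_2(A)$, which is $\tfrac16r^{2-n}\int R$ by \eqref{eq:gauss}.

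The main point needing care is the first step: showing that $\Phi$ is a bijection onto $U_{K_r}(t)$, and in particular that $s=\dist(y,K_r)$ for $y=x+s\nu(x)$. For this we will use the supporting hyperplane at $x$. Since $K_r$ lies in the half-space $\{\langle z-x,\nu\rangle\le0\}$, we get $|y-z|\ge s$ for every $z\in K_r$, with equality only at $z=x$. Everything after this is routine.
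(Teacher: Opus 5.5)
Your proposal is correct and follows essentially the same route as the paper. Both use the normal map $\Phi(x,s)=x+s\nu_r(x)$, the supporting-hyperplane argument showing that $\Phi$ is a bijection onto the exterior with $\Pi_{K_r}(\Phi(x,s))=x$, the Jacobian $\det(I+sA_r)$, and the rescaling $A_r=rA$, $\dd V_r=r^{-n}\dd V$. The only cosmetic difference is that you apply change of variables for a diffeomorphism, whereas the paper cites the area formula, which needs only injectivity.
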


\begin{proof}
For fixed $r>0$, let $M_r=\partial K_r$ and let $\nu_r$ be its
outward unit normal. Define
\begin{align*}
    \Phi(x,s)=x+s\nu_r(x)
\end{align*}
for $(x,s)\in M_r\times(0,\infty)$. Then $\Phi$ is a bijection
from $M_r\times(0,\infty)$ to $\R^{n+1}\setminus K_r$, and
$\Pi_{K_r}(\Phi(x,s))=x$. Indeed, the supporting-plane inequality
$\langle\nu_r(x),z-x\rangle\le0$ for $z\in K_r$ shows that
$x$ is the nearest point to $x+s\nu_r(x)$. Conversely, every
point outside $K_r$ has a unique nearest point on $M_r$, and
its displacement is a positive multiple of the outward normal.

Its Jacobian is
\begin{align*}
    \mathrm{Jac}(\Phi)(x,s)=\det(I+sA_r(x))=\sum_{q=0}^{n}\sigma_q(A_r(x))s^q,
\end{align*}
where $A_r(x)$ is the shape operator of $M_r$ at $x$.
The factors $1+s\kappa_i(A_r)$ are positive. By the area formula,
\begin{align*}
 T_{K_r}(t)
 &=\int_0^t\int_{M_r\cap B_1}
   \mathrm{Jac}(\Phi)(x,s)\dd V_{M_r}(x)\dd s\\
 &=\sum_{q=0}^n\frac{t^{q+1}}{q+1}
   \int_{M_r\cap B_1}\sigma_q(A_r(x))\dd V_{M_r}(x)\\
 &=\sum_{q=0}^n\frac{t^{q+1}}{q+1}
   r^{q-n}\int_{M\cap B_r}\sigma_q(A(x))\dd V_M(x).
\end{align*}
Here $A_r(x/r)=rA(x)$ under the natural identification of tangent
spaces, and the area element scales by $r^{-n}$.
Equation~\eqref{eq:gauss} gives \eqref{eq:two-coefficients}.
The integrals are finite because $M\cap\overline B_r$ is a compact
part of a smooth hypersurface that is closed as a subset of
$\R^{n+1}$.
\end{proof}

\begin{definition}
    Let
    \begin{equation}\label{eq:rec-definition}
     \rec(K)=\{v\in \mathbb{R}^{n+1}:x+tv\in K\text{ for all }x\in K, t\ge0\}
    \end{equation}
    be the recession cone of $K$.
\end{definition}
\begin{lemma}\label{lem:recession}
    $\rec(K)=\bigcap_{r>0}K_r$.
\end{lemma}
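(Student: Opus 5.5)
We prove the two inclusions separately. Throughout we use $0=p\in M\subset K$, so $0\in K$, and $K_r=r^{-1}K$. We also use that the recession cone of a closed convex set is independent of the base point: $v\in\rec(K)$ if and only if $x_0+tv\in K$ for all $t\ge0$, for one fixed $x_0\in K$.

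\emph{Step 1: $\rec(K)\subset\bigcap_{r>0}K_r$.} Let $v\in\rec(K)$ and $r>0$. Taking $x=0\in K$ and $t=r$ in \eqref{eq:rec-definition} gives $rv\in K$. Hence $v\in r^{-1}K=K_r$. As a side remark, convexity and $0\in K$ give $K_{r'}\subset K_r$ for $r'\ge r$, since $r'^{-1}y=(r/r')(r^{-1}y)+(1-r/r')\cdot 0$. So the intersection is a decreasing one, which matches the description of $K_r$ decreasing to $C$ used later. This monotonicity is not needed for the lemma itself.

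\emph{Step 2: $\bigcap_{r>0}K_r\subset\rec(K)$.} Let $v\in K_r$ for all $r>0$, so $tv\in K$ for all $t>0$, and also for $t=0$. Thus the ray $\{tv:t\ge0\}$ from $0$ lies in $K$. It remains to show that $x+tv\in K$ for every $x\in K$ and $t\ge0$; this is the standard base-point independence, and it is the only point that needs an argument.

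Fix $x\in K$ and $t\ge0$. For $\lambda\in(0,1)$, convexity gives
\[
 (1-\lambda)x+\lambda\cdot\frac{t}{\lambda}v=(1-\lambda)x+tv\in K,
\]
because both $x$ and $(t/\lambda)v$ lie in $K$. Letting $\lambda\to0^+$ and using that $K$ is closed yields $x+tv\in K$. Therefore $v\in\rec(K)$. Closedness of $K$ is the essential hypothesis here, and it is exactly what the statement assumes, so no real obstacle is expected.
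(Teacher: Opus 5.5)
Your proof is correct and follows essentially the same route as the paper: the first inclusion takes $x=0$ and $t=r$, and the second uses the convex combination $(1-\lambda)x+\lambda\cdot(t/\lambda)v$, which is the paper's $(1-t/s)x+(t/s)(sv)$ with $\lambda=t/s$, followed by a limit using closedness of $K$. Your monotonicity remark also matches the paper's observation that the sets $K_r$ decrease as $r$ increases.
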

\begin{proof}
    Since $0\in K$ and $K$ is convex, the sets $K_r$ decrease as $r$ increases. If $v\in\rec(K)$,
then $sv\in K$ for every $s>0$, so $v\in\bigcap_{r>0}K_r$.

Conversely, if $v\in\bigcap_{r>0}K_r$, then $sv\in K$ for
every $s>0$. For $x\in K$, $t\ge0$, and $s>t$, convexity gives
\[
 (1-t/s)x+tv=(1-t/s)x+(t/s)(sv)\in K.
\]
Letting $s\to\infty$ and using closedness gives $x+tv\in K$. Thus
$\bigcap_{r>0}K_r=\rec(K)$.
\end{proof}

\begin{lemma}\label{lem:tube-limit}
Denote $C=\rec(K)$. For every $y\in\R^{n+1}$,
\begin{equation}\label{eq:projection-tube-limit}
 \lim_{r\to\infty}\Pi_{K_r}(y)=\Pi_C(y),
\end{equation}
and for every $t>0$,
\begin{equation}
    \lim_{r\to\infty}T_{K_r}(t)= T_C(t).
\end{equation}
Furthermore, $T_C(t)=\sum_{m=1}^{n+1}c_m t^m$ is a polynomial,
and each $c_m$ is the limit of the coefficient of $t^m$ in
$T_{K_r}(t)$. In particular,
\begin{equation}\label{eq:extrinsic-limits}
 \lim_{r\to\infty}r^{-n}\Vol_M(M\cap B_r)=c_1,
 \qquad
 \lim_{r\to\infty}r^{2-n}\int_{M\cap B_r}R\dd V=6c_3.
\end{equation}
\end{lemma}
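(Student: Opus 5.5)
We first prove the projection convergence. By Lemma~\ref{lem:recession} the closed convex sets $K_r$ decrease to $C=\rec(K)$, and $0\in C\subset K_r$. Fix $y$ and set $x_r=\Pi_{K_r}(y)$. Then $|y-x_r|\le|y-\Pi_C(y)|$ because $\Pi_C(y)\in K_r$, so the points $x_r$ are bounded.

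Let $x$ be any subsequential limit along $r_j\to\infty$. For every fixed $r$ we have $x_{r_j}\in K_{r_j}\subset K_r$ once $r_j\ge r$. Since $K_r$ is closed, $x\in K_r$ for all $r$, hence $x\in C$. Moreover $|y-x|\le|y-\Pi_C(y)|=\dist(y,C)$. Uniqueness of the nearest point forces $x=\Pi_C(y)$, and so the whole family $x_r$ converges. The same argument gives $\dist(y,K_r)\uparrow\dist(y,C)$, since these distances are monotone in $r$ and bounded by $\dist(y,C)$.

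Next we take the limit of the tube volumes. We apply dominated convergence to the indicator $\mathbf 1_{U_{K_r}(t)}(y)$. Every such set lies in $B_0(1+t)$ (noted after \eqref{eq:tube-definition}), so the integrable dominant is $\mathbf 1_{B_0(1+t)}$. It remains to show pointwise convergence to $\mathbf 1_{U_C(t)}(y)$ off a null set.

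If $y\in U_C(t)$, the conditions are open and the quantities converge: $\dist(y,K_r)\to\dist(y,C)\in(0,t)$ and $|\Pi_{K_r}(y)|\to|\Pi_C(y)|<1$. Hence $y\in U_{K_r}(t)$ for large $r$.

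If $y\notin\overline{U_C(t)}$ in the appropriate sense, the reverse holds. The exceptional points are where $\dist(y,C)=t$, where $|\Pi_C(y)|=1$, or where $y\in C$ with $y\in U_{K_r}(t)$ for infinitely many $r$.

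We treat these exceptional sets one at a time.
\begin{itemize}
\item The set $\{\dist(\cdot,C)=t\}$ is null, because it is the boundary of a convex body (the $t$-parallel body of $C$).
\item The set of $y\notin C$ with $|\Pi_C(y)|=1$ is null, by the area-formula/Steiner-type argument for convex sets. Equivalently, $\Pi_C$ is Lipschitz and pushes Lebesgue measure on $\{0<\dist<t\}$ to a measure absolutely continuous against the curvature measures. Alternatively, one can use the cone structure: $\{|\Pi_C(y)|=1\}$ scales to a family of disjoint sets $\{|\Pi_C|=s\}$ indexed by $s$, and countable additivity on bounded regions forces all but countably many to be null. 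Homogeneity, $\Pi_C(sy)=s\Pi_C(y)$, then makes every one of them null.
\item For $y$ in the interior of $C$, $y$ lies in the interior of $K_r$, so $y\notin U_{K_r}(t)$.
\item $\partial C$ is null.
\end{itemize}
Dominated convergence then gives $T_{K_r}(t)\to T_C(t)$ for every $t>0$.

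For the polynomial structure, Lemma~\ref{lem:tube-polynomial} writes $T_{K_r}(t)=\sum_{m=1}^{n+1}a_m(r)t^m$ with $a_m(r)\ge0$, since $\sigma_q(A)\ge0$. We evaluate at $n+1$ distinct values $t_1,\dots,t_{n+1}>0$ and invert the Vandermonde matrix. This expresses each $a_m(r)$ as a fixed linear combination of $T_{K_r}(t_i)$, so $a_m(r)\to c_m:=\sum_i\lambda_{mi}T_C(t_i)$.

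Then for every $t>0$ the pointwise limit is $T_C(t)=\lim_r\sum_m a_m(r)t^m=\sum_m c_mt^m$, a polynomial. Reading off $m=1$ and $m=3$ in \eqref{eq:two-coefficients} yields \eqref{eq:extrinsic-limits}.

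The main obstacle is the null-set step, specifically showing that $\{y\notin C:|\Pi_C(y)|=1\}$ has measure zero. I would handle it first by the homogeneity and countable-disjointness trick above, which avoids any regularity of $\partial C$.
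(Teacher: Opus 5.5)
Your proposal is correct and follows essentially the paper's route: you prove projection convergence by compactness and uniqueness of nearest points, pass $T_{K_r}(t)\to T_C(t)$ by dominated convergence off null level sets, and use interpolation at $n+1$ positive nodes (your Vandermonde inversion, the paper's Lagrange interpolation with the extra node $t_0=0$) to carry the coefficients to the limit. The differences are cosmetic. You show the level sets are null via the boundary of the parallel body and a scaling/countable-additivity argument, where the paper uses a one-line polar-coordinates observation from $\Pi_C(\lambda y)=\lambda\Pi_C(y)$ and $\dist(\lambda y,C)=\lambda\dist(y,C)$. Your interior/boundary split for $y\in C$ is also unnecessary, since $C\subset K_r$ already gives $\dist(y,K_r)=0$, so $y\notin U_{K_r}(t)$ for every $r$.
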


\begin{proof}
Fix $y\in\R^{n+1}$ and set $x_r=\Pi_{K_r}(y)$.
Since $0\in K_r$, we have $|y-x_r|\le|y|$, and hence
\begin{align*}
    |x_r|\leq |y-x_r|+|y|\leq 2|y|.
\end{align*}
That is, $\{x_r\}_{r>0}$ is bounded.

Along any sequence $r_i\to\infty$, choose a convergent subsequence
of $x_{r_i}$, and denote its limit by $x$. Relabel this subsequence
as $x_{r_i}$. For each fixed $s>0$, eventually $r_i\ge s$, so
$x_{r_i}\in K_s$. Closedness gives $x\in K_s$, hence $x\in C$.
By the definition of the nearest point and $C\subset K_{r_i}$,
\begin{align*}
    |y-x_{r_i}|=\dist(y,K_{r_i})\le\dist(y,C).
\end{align*}
Passing to the limit gives
\begin{align*}
    |y-x|\leq \dist(y,C).
\end{align*}

Since $x\in C$, equality must hold, so $x=\Pi_C(y)$.
Every convergent subsequence has this same limit; boundedness
therefore gives $\Pi_{K_r}(y)\to\Pi_C(y)$ as $r\to\infty$.
It also gives $\dist(y,K_r)\to\dist(y,C)$.

Since $C$ is invariant under positive dilations, $\lambda C=C$
for every $\lambda>0$. Hence
\[
 \dist(\lambda y,C)=\dist(\lambda y,\lambda C)
 =\lambda\dist(y,C).
\]
The point $\lambda\Pi_C(y)\in\lambda C$ attains this distance.
Uniqueness of the nearest point therefore gives
\[
 \Pi_C(\lambda y)=\Pi_{\lambda C}(\lambda y)=\lambda\Pi_C(y).
\]
It follows by polar coordinates that both sets
\[
 \{y:|\Pi_C(y)|=1\},\qquad
 \{y:\dist(y,C)=t\}
\]
have $(n+1)$-dimensional volume zero: on each ray from the origin,
each indicated positive level is attained at most once.
Outside these sets, if $y\notin C$, convergence of projections and
distances gives
$\mathbf1_{U_{K_r}(t)}(y)\to\mathbf1_{U_C(t)}(y)$.
If $y\in C$, then $y\in K_r$ for every $r$, so all these indicators
are zero. All indicators vanish outside $B_{1+t}$. The dominated
convergence theorem then proves $T_{K_r}(t)\to T_C(t)$.

By Lemma~\ref{lem:tube-polynomial}, $T_{K_r}$ is a polynomial
of degree at most $n+1$ for $t>0$ and extends to $t=0$ with
value zero. Set $t_0=0$ and choose distinct positive numbers
$t_1,\ldots,t_{n+1}$. The Lagrange polynomials
\[
 P_k(t)=\prod_{\substack{0\le j\le n+1\\j\ne k}}
 \frac{t-t_j}{t_k-t_j},\qquad 0\le k\le n+1,
\]
are fixed independently of $r$. Lagrange interpolation gives
for every $t\ge0$
\begin{align*}
 T_{K_r}(t)=\sum_{k=0}^{n+1}T_{K_r}(t_k)P_k(t)
 =\sum_{k=1}^{n+1}T_{K_r}(t_k)P_k(t).
\end{align*}
Letting $r\to\infty$ gives
$T_C(t)=\sum_{k=1}^{n+1}T_C(t_k)P_k(t)$ for $t\ge0$.
Thus $T_C$ is a polynomial, and each coefficient is the limit
of the corresponding coefficient of $T_{K_r}$. In particular,
\eqref{eq:two-coefficients} implies \eqref{eq:extrinsic-limits}.
\end{proof}

\begin{lemma}[Asymptotic comparison of radial distances]\label{lem:radial-distance}
For every $p\in M$,
\begin{equation}\label{eq:radial-distance}
 \frac{d_M(p,x)}{|x-p|}\longrightarrow1
 \qquad\text{as }|x-p|\longrightarrow\infty,
\end{equation}
uniformly in $x\in M$. Consequently, for every $\varepsilon>0$
and all sufficiently large $r$,
\begin{equation}\label{eq:radial-ball-comparison}
 E_p\!\left(\frac{r}{1+\varepsilon}\right)
 \subset B_p^M(r)\subset E_p(r).
\end{equation}
\end{lemma}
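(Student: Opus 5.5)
The inequality $d_M(p,x)\ge|x-p|$ is immediate, since any curve in $M$ is a curve in $\R^{n+1}$. This gives $B_p^M(r)\subset E_p(r)$ for every $r$. The whole content of the lemma is therefore the upper bound $d_M(p,x)\le(1+o(1))|x-p|$ as $|x-p|\to\infty$, uniformly in $x\in M$. Once it is known, the left inclusion in \eqref{eq:radial-ball-comparison} follows at once. Indeed, choose $R_0$ with $d_M(p,x)<(1+\varepsilon)|x-p|$ whenever $|x-p|\ge R_0$. For $x\in E_p(r/(1+\varepsilon))$ with $|x-p|\ge R_0$ we get $d_M(p,x)<r$. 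Points with $|x-p|<R_0$ lie in the compact set $M\cap\overline{B}_p(R_0)$, where $d_M(p,\cdot)$ is bounded by some $D_0$. So $r>D_0$ suffices.

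For the upper bound I plan to construct an explicit short curve in $M$ from $p$ to $x$ by projecting the segment. Take $p=0$. Because $K$ has nonempty interior, fix an interior point $o$ and a ball $B(o,\rho)\subset K$. Let $y\in M$ with $|y|=\ell$ large. The segment $[0,y]$ lies in $K$. I push it out to $M$ by radial projection from $o$: each $z\in[0,y]$ is sent to the boundary point $\gamma(z)$ on the ray from $o$ through $z$. Radial projection from an interior point onto the boundary of a convex set is locally Lipschitz, with constant controlled by the ratio of $|z-o|$ to $\rho$ and by the angle between the ray and the supporting planes. However, it need not have Lipschitz constant close to $1$.

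A cleaner route is to use the nearest-point projection $\Pi_K$, which is $1$-Lipschitz, applied to a curve running outside $K$ near the segment. Consider the segment $[0,y]$ translated outward. Let $\nu$ be a unit vector with $-\nu\in\operatorname{int}\rec(K)$ if the recession cone has interior, or more generally use the tube structure. Concretely, the path $\sigma(s)=sy+\delta\ell\,\phi(s)\nu_0$ goes from $0$ to $y$ and stays at distance comparable to $\delta\ell$ outside $K$ in the middle. Its projection $\Pi_K\circ\sigma$ lies on $M$, joins $0$ to $y$, and has length at most $\operatorname{length}(\sigma)\le(1+C\delta)\ell$. The difficulty is guaranteeing that $\sigma$ stays outside $K$, i.e.\ choosing the outward direction.

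So the plan is first to reduce to the rescaled picture. Put $x_r=y/\ell$ with $\ell=r$, so that $x_r\in M_r=\partial K_r$, $|x_r|=1$ and $0\in M_r$. Then use the convergence $K_r\downarrow C$ from Lemma~\ref{lem:recession} and the projection convergence in Lemma~\ref{lem:tube-limit}. The segment $[0,x_r]$ lies in $K_r$, and I would show it lies within distance $o(1)$ of $\partial K_r$ uniformly. Convexity of the distance function to the complement on $K_r$ is the tool here. The function $h(z)=\dist(z,\R^{n+1}\setminus K_r)$ is concave on $K_r$ and vanishes at both endpoints, so $h\equiv0$ unless the segment enters the interior. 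I would bound $\max h$ on the segment by compactness in the limit. Limits of segments $[0,x_{r}]$ are segments $[0,x]$ with $x\in\partial C$, along which the distance to $\partial C$ is zero by the cone structure: $\partial C$ contains the ray through $x$.

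Then I take the $1$-Lipschitz projection onto $M_r$ of the curve obtained by moving each point of the segment outward by $2\max h$. The extra length is $O(\max h)=o(1)$, and concatenating with short arcs at the endpoints gives $d_{M_r}(0,x_r)\le1+o(1)$.

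The main obstacle is this uniform smallness of $h$ on the segment together with a uniform way of pushing the segment outside $K_r$. I would attempt both by a contradiction/compactness argument over sequences $r_i\to\infty$, $x_{r_i}\to x\in\partial C$, using Hausdorff convergence of $K_{r_i}\cap\overline B_2$ to $C\cap\overline B_2$.
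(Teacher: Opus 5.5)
Your reduction is sound. The bound $d_M(p,x)\ge|x-p|$, the passage from the upper bound to \eqref{eq:radial-ball-comparison}, and the uniform smallness of $h$ on $[0,x_r]$ are all fine. The last follows by compactness: limits $x$ of $x_{r_i}$ lie in $\partial C$, so $[0,x]\subset\partial C$, and every point off $C$ eventually leaves $K_{r_i}$.

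The gap is the step you flag yourself, and it is the whole content of the lemma: a curve outside $K_r$, of length $1+o(1)$, joining $0$ to $x_r$. Smallness of $h$ only says that each $z\in[0,x_r]$ has \emph{some} exterior point within $\max h$. It gives no continuous outward displacement, and no principle bounding intrinsic distance by $|a-b|+O(\max h)$ holds for convex bodies. For a smoothed slab $[-L,L]^2\times[-\eta,\eta]$, the boundary points $a=(0,0,\eta)$ and $b=(1,0,-\eta)$ have $|a-b|\approx1$ and $[a,b]$ lies within $\eta$ of the boundary, yet their intrinsic distance is $\gtrsim L$.

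The same configuration appears in your rescaled picture when $\operatorname{int}C=\emptyset$. Take $K=\{x_2\ge x_3^2\}\subset\R^3$, with $p$ on the sheet $x_3>0$ and $x$ far out on the sheet $x_3<0$. Then $K_r$ is a thin region around the half-plane $C$, and $0$ and $x_r$ lie on opposite faces of it. An exterior curve must therefore turn around the edge near $0$, and the natural fixed pushes fail: $\pm e_3$ points into $K_r$ at one of the endpoints, while pushing along $-e_2$ needs a displacement of order $1$. Your ``short arcs at the endpoints'' are also circular. The claim that points of $M_r$ within $o(1)$ of $0$ are within $o(1)$ intrinsically is, after scaling back by $r$, the lemma itself at scale $o(r)$.

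Your plan can be completed when $\operatorname{int}C\neq\emptyset$. Suppose $B(-\nu_0,\alpha)\subset C$ with $|\nu_0|=1$. Since $w+C\subset K_r$ for every $w\in K_r$, the condition $z+\tau\nu_0\in K_r$ forces $B(z,\tau\alpha)\subset K_r$. Hence the polygon $0\to s\nu_0\to x_r+s\nu_0\to x_r$, with $s\alpha>\max h$, lies outside $K_r$ except at its endpoints. Its image under $\Pi_{K_r}$ is a curve in $M_r$ of length at most $1+2s$. Lower-dimensional recession cones (paraboloids, cylinders) remain uncovered.

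The paper avoids exterior curves altogether and uses the recession cone only at the far point. Let $x_j\in M$ with $|x_j|\to\infty$ and $u_j=x_j/|x_j|\to u$; then $u\in C$. Testing the supporting hyperplane at $x_j$ on $0\in K$ and $x_j+u\in K$ gives $0\le\langle u_j,\nu_j\rangle\le|u_j-u|\to0$. Thus $|\nabla_M\rho|\to1$ uniformly at infinity for $\rho=|x|$. The flow of $-\nabla_M\rho/|\nabla_M\rho|^2$ then carries $x$ to $\{\rho=T\}$ along a curve in $M$ of length at most $(|x|-T)/(1-\delta)$, and compactness of $M\cap\{\rho\le T\}$ handles the rest. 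This intrinsic radial descent is the idea your proposal lacks.
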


\begin{proof}
	By translation, assume that $p=0$, and set $\rho(x)=|x|$ on
	$M\setminus\{0\}$. We first show that
	\begin{equation}\label{eq:radial-gradient}
		|\nabla_M\rho(x)|\longrightarrow1
		\qquad\text{as }|x|\longrightarrow\infty,
	\end{equation}
	uniformly in $x\in M$.
	
	Let $x_j\in M$ satisfy $r_j=|x_j|\to\infty$, and put
	$u_j=x_j/r_j$. After passing to a subsequence, assume that
	$u_j\to u$. For each fixed $s>0$, we have
	$u_j\in K_{r_j}\subset K_s$ for all sufficiently large $j$.
	By closedness and Lemma~\ref{lem:recession},
	\[
	u\in\bigcap_{s>0}K_s=C.
	\]
	Let $\nu_j$ be the outward unit normal at $x_j$. Applying the
	supporting hyperplane inequality to $0\in K$ and $x_j+u\in K$
	gives
	\[
	\langle u_j,\nu_j\rangle\ge0,
	\qquad
	\langle u,\nu_j\rangle\le0.
	\]
	Consequently,
	\[
	0\le\langle u_j,\nu_j\rangle
	\le\langle u_j-u,\nu_j\rangle
	\le|u_j-u|\longrightarrow0,
	\]
	and hence
	\[
	|\nabla_M\rho(x_j)|^2
	=1-\langle u_j,\nu_j\rangle^2\longrightarrow1.
	\]
	A contradiction argument now proves the uniform convergence
	in \eqref{eq:radial-gradient}.
	
	Fix $0<\delta<1$. Choose $T>0$ such that
	$|\nabla_M\rho|\ge1-\delta$ whenever $\rho\ge T$. Starting from any $x\in M$ with $|x|>T$, consider
	the flow $\gamma$ of
	\[
	X=-\frac{\nabla_M\rho}{|\nabla_M\rho|^2},
	\qquad \gamma(0)=x.
	\]
	Along this flow,
	\[
	\frac{d}{ds}\rho(\gamma(s))=-1,
	\qquad |X|\le\frac1{1-\delta}.
	\]
	The vector field is smooth on a neighborhood of the compact
	annulus $M\cap\{T\le\rho\le|x|\}$. Thus the flow reaches a point
	$z\in M\cap\{\rho=T\}$ at time $|x|-T$, and
	\[
	d_M(x,z)\le\frac{|x|-T}{1-\delta}.
	\]
	Since $M$ is connected and $M\cap\{\rho\le T\}$ is compact,
	\[
	A_T:=\max_{y\in M,\ |y|\le T}d_M(0,y)<\infty.
	\]
	It follows that
	\[
	|x|\le d_M(0,x)
	\le A_T+\frac{|x|-T}{1-\delta}.
	\]
	Dividing by $|x|$, letting $|x|\to\infty$, and then letting
	$\delta\downarrow0$ proves \eqref{eq:radial-distance} uniformly.
	
	Finally, given $\varepsilon>0$, choose $S>0$ such that
	\[
	d_M(0,x)\le(1+\varepsilon)|x|
	\qquad\text{whenever }|x|>S.
	\]
	The distance to $0$ has a finite maximum on
	$M\cap\overline B_0^{\R^{n+1}}(S)$.
	For $r$ larger than this maximum, every point of
	$E_0(\frac{r}{1+\varepsilon})$ has intrinsic distance less than $r$, and therefore lies in $B_0^M(r)$.
	The reverse inclusion $B_0^M(r)\subset E_0(r)$ follows from
	$|x|\le d_M(0,x)$.
\end{proof}

\begin{cor}[Intrinsic limits]\label{cor:intrinsic-limits}
Let $c_1,\ldots,c_{n+1}$ be the coefficients of $T_C$ in
Lemma~\ref{lem:tube-limit}. Then, for every $p\in M$,
\begin{equation}\label{eq:intrinsic-coefficients}
 V_M=\frac{c_1}{\omega_n},\qquad
 L(M):=\lim_{r\to\infty}r^{2-n}
 \int_{B_p^M(r)}R\dd V=6c_3.
\end{equation}
In particular, the scalar-curvature limit exists, is finite,
and is independent of $p$.
\end{cor}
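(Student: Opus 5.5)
The plan is to sandwich the intrinsic balls between extrinsic sets using Lemma~\ref{lem:radial-distance}, and then pass to the limit with Lemma~\ref{lem:tube-limit}. Fix $p\in M$ and translate so that $p=0$. The Euclidean balls $B_r$ and $E_0(r)=M\cap B_r$ are then the ones used in Lemmas~\ref{lem:tube-polynomial} and \ref{lem:tube-limit}. By \eqref{eq:radial-ball-comparison}, for each $\varepsilon>0$ and all large $r$,
\[
 E_0\!\left(\tfrac{r}{1+\varepsilon}\right)\subset B_0^M(r)\subset E_0(r).
\]

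Since $R\ge0$ by \eqref{eq:gauss} and the volume form is positive, both $\Vol_M$ and $\int R\dd V$ are monotone under these inclusions. This gives
\[
 (1+\varepsilon)^{-n}\,s^{-n}\Vol_M(E_0(s))\le \frac{\Vol_M(B_0^M(r))}{r^n}\le r^{-n}\Vol_M(E_0(r)),
 \qquad s=\tfrac{r}{1+\varepsilon},
\]
together with the analogous chain with $r^{2-n}$ and the factor $(1+\varepsilon)^{2-n}$ for the scalar curvature integral.

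Letting $r\to\infty$ and applying \eqref{eq:extrinsic-limits}, the outer terms converge to $(1+\varepsilon)^{-n}c_1$ and $c_1$. For the curvature they converge to $(1+\varepsilon)^{2-n}\cdot 6c_3$ and $6c_3$. The $\liminf$ and $\limsup$ of the intrinsic quantities are therefore trapped between these bounds. Letting $\varepsilon\downarrow0$ shows that $\lim_r r^{-n}\Vol_M(B_0^M(r))=c_1$ and that $L(M)$ exists and equals $6c_3$, which is finite. Dividing by $\omega_n$ gives $V_M=c_1/\omega_n$, consistent with the Bishop--Gromov definition of $V_M$, since that limit exists and is basepoint independent.

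The main obstacle is basepoint independence. The coefficients $c_m$ were computed after translating $p$ to the origin, so a priori $C$ and $T_C$ depend on $p$. I would resolve this by noting that the recession cone $\rec(K)$ is defined intrinsically by \eqref{eq:rec-definition} and is translation invariant: $\rec(K-p)=\rec(K)$. Then Lemma~\ref{lem:recession}, applied to $K-p$, always produces the same cone $C$, and $T_C$ depends only on $C$. Hence $c_1$ and $c_3$, and so $V_M$ and $L(M)$, are independent of $p$. The only other point needing care is that every inclusion used is valid once $r$ is large, which Lemma~\ref{lem:radial-distance} guarantees uniformly.
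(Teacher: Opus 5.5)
Your proposal is correct and follows the paper's proof essentially step for step. Both arguments use the sandwich $E_p\bigl(r/(1+\varepsilon)\bigr)\subset B_p^M(r)\subset E_p(r)$ from Lemma~\ref{lem:radial-distance}, monotonicity from $R\ge0$, the extrinsic limits \eqref{eq:extrinsic-limits} with the factors $(1+\varepsilon)^{-n}$ and $(1+\varepsilon)^{2-n}$, then $\varepsilon\downarrow0$, and basepoint independence via translation invariance of $\rec(K)$.
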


\begin{proof}
For fixed $\varepsilon>0$, apply Lemma~\ref{lem:tube-limit} and Lemma~\ref{lem:radial-distance} to
obtain
\[
 \frac{c_1}{(1+\varepsilon)^n}
 \le\liminf_{r\to\infty}r^{-n}\Vol_M(B_p^M(r))
 \le\limsup_{r\to\infty}r^{-n}\Vol_M(B_p^M(r))
 \le c_1.
\]
Letting $\varepsilon\downarrow0$ gives $V_M=c_1/\omega_n$.
Similarly, since $R\ge0$ and $n\ge3$,
\[
 \frac{6c_3}{(1+\varepsilon)^{n-2}}
 \le\liminf_{r\to\infty}r^{2-n}\int_{B_p^M(r)}R\dd V
 \le\limsup_{r\to\infty}r^{2-n}\int_{B_p^M(r)}R\dd V
 \le6c_3.
\]
Letting $\varepsilon\downarrow0$ proves the second equality in
\eqref{eq:intrinsic-coefficients}. The coefficients are finite
by Lemma~\ref{lem:tube-limit}. Finally, translating the basepoint
does not change $C=\rec(K)$, so it does not change the polynomial
$T_C$ or its coefficients. Hence $L(M)=6c_3$ is independent of $p$.
\end{proof}

\section{Recession cones that are not linear subspaces}\label{sec:nonlinear}

\begin{definition}
    A nonempty set $C\subset\R^{n+1}$ is a closed convex cone if it is closed and convex and
    \begin{align*}
        \lambda C= C
    \end{align*}
    for all $\lambda>0$. In particular, $0\in C$.
\end{definition}

In this section, we treat the case in which $C=\rec(K)$ is not a
linear subspace. 

We first recall
the master Steiner formula of McCoy--Tropp
\cite[Theorem~3.1 and Corollary~3.2, pp.~931--932]{McCoyTropp} in the form used
here. There are nonnegative numbers $v_0(C),\ldots,v_{n+1}(C)$,
with $\sum_{k=0}^{n+1}v_k(C)=1$, such that, for every bounded
Borel function $f:[0,\infty)^2\to\R$,
\begin{equation}\label{eq:master-steiner}
 \mathbb E f\bigl(|\Pi_C(G)|^2,\dist(G,C)^2\bigr)
 =\sum_{k=0}^{n+1}v_k(C)\,
 \mathbb E f\bigl(|G_k|^2,|H_{n+1-k}|^2\bigr).
\end{equation}
The numbers $v_k(C)$ are the conic intrinsic volumes of $C$
defined in \cite[Definition~2.2, p.~930]{McCoyTropp}; their nonnegativity
is also recorded in \cite[Fact~5.5(1), equation~(5.1), p.~250]{ALMT}.
Here $G$ is a standard Gaussian vector in $\R^{n+1}$; for each
$k$, $G_k$ and $H_{n+1-k}$ are independent standard Gaussian
vectors in $\R^k$ and $\R^{n+1-k}$, respectively. And $\mathbb E$ is the expectation. A zero-dimensional
Gaussian vector is understood to be $0$.

\begin{lemma}[Cone projection]\label{lem:cone-projection}
	Let $C\subset\R^d$ be a nonempty closed convex cone. Then, for
	every $y\in\R^d$,
	\[
	\langle \Pi_C(y),y-\Pi_C(y)\rangle=0,
	\qquad
	|\Pi_C(y)|^2+\dist(y,C)^2=|y|^2.
	\]
\end{lemma}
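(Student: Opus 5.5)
The plan is to use the variational characterization of the nearest point together with the dilation invariance of $C$. Write $x=\Pi_C(y)$. Since $C$ is closed and convex, $x$ is characterized by
\[
 \langle y-x,z-x\rangle\le0\qquad\text{for all }z\in C .
\]
This is the standard obtuse-angle criterion. It follows by expanding $|y-(x+s(z-x))|^2\ge|y-x|^2$ for $s\in(0,1]$, dividing by $s$, and letting $s\downarrow0$.

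Next I will test this inequality with two particular elements of $C$, using that $\lambda C=C$ for all $\lambda>0$ and $0\in C$. Taking $z=2x\in C$ gives $\langle y-x,x\rangle\le0$. Taking $z=0\in C$ gives $\langle y-x,-x\rangle\le0$, that is, $\langle y-x,x\rangle\ge0$. If $x=0$ the first identity is trivial, so one may also take $z=\lambda x$ with $\lambda$ on either side of $1$. Together these give $\langle \Pi_C(y),y-\Pi_C(y)\rangle=0$.

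For the second identity, write $y=x+(y-x)$ and expand:
\[
 |y|^2=|x|^2+2\langle x,y-x\rangle+|y-x|^2=|\Pi_C(y)|^2+\dist(y,C)^2 ,
\]
using the orthogonality just proved and $|y-x|=\dist(y,C)$. Nothing here is a real obstacle. The only point needing care is the justification of the variational inequality, and it also holds when $C$ is not full-dimensional or when $x=0$.
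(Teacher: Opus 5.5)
Your proof is correct and is essentially the paper's argument. Testing the obtuse-angle inequality with $z=2x$ and $z=0$ amounts to taking the two one-sided derivatives at $t=1$ of $t\mapsto|y-tx|^2$ along the ray $\{tx:t\ge0\}\subset C$. The paper does exactly this by differentiating at the minimum $t=1$, and then obtains the second identity by the same expansion.
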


\begin{proof}
	Set $x=\Pi_C(y)$. Since $tx\in C$ for every $t\ge0$, the function
	$t\mapsto |y-tx|^2$ attains its minimum at $t=1$. Differentiating
	at $t=1$ gives
	\[
	0=-2\langle y-x,x\rangle.
	\]
	The second identity follows by expanding $|y|^2=|x+(y-x)|^2$
	and using $|y-x|=\dist(y,C)$.
\end{proof}

The conic intrinsic volumes satisfy the following Gauss--Bonnet
formula \cite[Fact~5.5(3), equation~(5.3), p.~251]{ALMT}.

\begin{lemma}[Conic Gauss--Bonnet formula]\label{lem:conic-gauss-bonnet}
	Let $C\subset\R^{n+1}$ be a nonempty closed convex cone
	that is not a linear subspace. Then
	\begin{equation}\label{eq:conic-gauss-bonnet}
		\sum_{\substack{0\le k\le n+1\\ k\ \mathrm{even}}}v_k(C)
		=
		\sum_{\substack{0\le k\le n+1\\ k\ \mathrm{odd}}}v_k(C)
		=\frac12.
	\end{equation}
\end{lemma}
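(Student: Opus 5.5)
The plan is to reduce the statement to the vanishing of the alternating sum
\[
\sum_{k=0}^{n+1}(-1)^k v_k(C)=0
\qquad\text{for every closed convex cone $C$ that is not a linear subspace.}
\]
Combined with the normalization $\sum_k v_k(C)=1$ quoted from \eqref{eq:master-steiner}, this gives both sums in \eqref{eq:conic-gauss-bonnet} equal to $\tfrac12$. I would prove the alternating identity first for polyhedral cones and then pass to general cones by approximation.

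\emph{Reductions.} Let $\ell=\dim\bigl(C\cap(-C)\bigr)$ be the dimension of the lineality space $L$. Then $C=L\oplus C'$ with $C'\subset L^\perp$ a pointed cone. Since $C$ is not a subspace, $C'\neq\{0\}$. Because $\Pi_C=\Pi_L\oplus\Pi_{C'}$ and Gaussian vectors split orthogonally, \eqref{eq:master-steiner} gives $v_k(C)=v_{k-\ell}(C')$. The alternating sum therefore only changes by the sign $(-1)^\ell$, and we may assume $C$ is pointed and nonzero.

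\emph{Polyhedral case.} For a polyhedral pointed cone, the event $\{\Pi_C(G)\in\operatorname{relint}F\}$ equals $\{G\in \operatorname{relint}F+N_F C\}$, where $N_FC$ is the normal cone of $C$ along the face $F$. By orthogonality of $\operatorname{lin}F$ and $N_FC$, this yields the angle representation
\[
v_k(C)=\sum_{\dim F=k}\alpha(F)\,\alpha(N_FC),
\]
where $\alpha(\cdot)$ denotes the Gaussian solid angle, i.e.\ the probability that a Gaussian vector in the linear span lies in the cone. The alternating identity then becomes
\[
\sum_F(-1)^{\dim F}\alpha(F)\,\alpha(N_FC)=0.
\]
I would prove this by an Euler-characteristic count. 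For a generic Gaussian direction $g$, the linear function $\langle g,\cdot\rangle$ on the spherical polytope $C\cap S^{n}$ has critical faces that are counted with signs by these terms. Since $C$ is pointed and nonzero, $C\cap S^n$ is contractible, so its Euler characteristic is $1$. Comparing with the decomposition of $\R^{n+1}$ into the cells $\operatorname{relint}F+N_FC$ and taking the expectation over $g$ should give the vanishing. Equivalently, one can invoke the classical Sommerville/Gram relation $\sum_{F\supseteq\{0\}}(-1)^{\dim F}\alpha(F)=0$, applied to tangent cones and summed against the polar angles.

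\emph{Approximation.} For a general cone, approximate $C$ by polyhedral cones $C_j\to C$ in the sense that $\Pi_{C_j}\to\Pi_C$ pointwise. Since $C$ is not a subspace, the $C_j$ can be chosen not to be subspaces either, for instance by taking them to contain a fixed vector $v\in C$ with $-v\notin C$. Pointwise convergence of projections and distances, together with \eqref{eq:master-steiner} applied to $f(a,b)=e^{-sa-ub}$, gives convergence of the generating function $\sum_k v_k(C_j)(1+2s)^{-k/2}(1+2u)^{-(n+1-k)/2}$ for all $s,u\ge0$. These functions are linearly independent in $k$, so $v_k(C_j)\to v_k(C)$ for each $k$, and the alternating identity passes to the limit.

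The main obstacle is the polyhedral Euler-type relation: setting up the sign count correctly, in particular the roles of $F$ versus $N_FC$. If the direct Morse-type count proves messy, I would fall back on the classical Sommerville relations for spherical polytopes, which is exactly the content cited from \cite{ALMT}.
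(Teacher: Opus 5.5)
The paper gives no argument for this lemma. It is quoted from \cite[Fact~5.5(3)]{ALMT} and used as a black box for all closed convex cones. Your route is a genuinely different, self-contained proof:
\begin{itemize}
\item split off the lineality space;
\item prove $\sum_k(-1)^kv_k(C)=0$ for pointed polyhedral cones from the angle formula $v_k(C)=\sum_{\dim F=k}\alpha(F)\alpha(N_FC)$ and $\chi(C\cap S^n)=1$;
\item pass to general cones using the continuity of the $v_k$, which you correctly extract from \eqref{eq:master-steiner} with $f(a,b)=e^{-sa-ub}$.
\end{itemize}
Its architecture is sound, and it shows that the identity is just the Euler characteristic of the spherical section of a pointed cone. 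The lineality reduction and the limit argument are fine. The one slip there is that containing $v$ does not stop $C_j$ from being a subspace; what you need is $-v\notin C_j$. That is automatic for inner approximations $C_j\subseteq C$ (cones over finite nets of $C\cap S^n$ containing $v$). Alternatively, a pointwise limit of linear projections is linear, so $C_j$ is not a subspace for large $j$.

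The real issue is the step you flag yourself: it is not yet proved, and your fallback for it is false. The relation $\sum_F(-1)^{\dim F}\alpha(F)=0$ already fails for $C=\R_+\subset\R$, where the sum is $1-\tfrac12=\tfrac12$. The correct Sommerville relation uses internal angles $\beta(G,F)=\alpha(F+\operatorname{lin}G)$, measured in $\operatorname{lin}F$. For pointed $F$ it reads $\sum_{G\le F}(-1)^{\dim G}\beta(G,F)=(-1)^{\dim F}\alpha(F)$. Substitute this for $\alpha(F)$ and exchange the sums. Then use the partition relation $\sum_{F\ge G}\beta(G,F)\alpha(N_FC)=1$, which is the cell decomposition for the cone $C+\operatorname{lin}G$. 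This reduces the claim to Euler's relation $\sum_G(-1)^{\dim G}=0$ over the faces of a pointed nonzero polyhedral cone.

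Your Morse count also works once made precise. For generic $g$, put $g_F=\Pi_{\operatorname{lin}F}g$. For $F\ne\{0\}$, the stratum $\operatorname{relint}F\cap S^n$ carries a critical point of $-\langle g,\cdot\rangle$ that contributes to $\chi$ exactly when $g-g_F\in N_FC$ and $\pm g_F\in\operatorname{relint}F$. Its index is $0$ for the sign $+$ and $\dim F-1$ for the sign $-$. Taking expectations in $\chi(C\cap S^n)=1$ gives
\[
1=\sum_{F\ne\{0\}}\bigl(1-(-1)^{\dim F}\bigr)\alpha(F)\alpha(N_FC)
=(1-v_0)-\Bigl(\sum_k(-1)^kv_k-v_0\Bigr),
\]
which is $\sum_k(-1)^kv_k=0$. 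With either completion your proof stands. Without one, the polyhedral case still rests on a citation, exactly as in the paper.
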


\begin{lemma}[Cone coefficients]\label{lem:cone-coefficients}
Let $C\subset\R^{n+1}$ be a closed convex cone, where $n\ge3$. Suppose $T_C(t)=\sum_{m=1}^{n+1}c_m t^m$ for $t>0$, and let $v_k(C)$ be the conic intrinsic volumes. Then
\begin{equation}\label{eq:cone-coefficients}
 v_{n+1-m}(C)=\frac{c_m}{\omega_{n+1-m}\omega_m},
 \qquad 1\le m\le n+1.
\end{equation}
If $C$ is not a linear subspace, then
\begin{equation}\label{eq:odd-coefficients}
 \sum_{\substack{1\le m\le n+1\\m\ \mathrm{odd}}}
 \frac{c_m}{\omega_{n+1-m}\omega_m}=\frac12.
\end{equation}
\end{lemma}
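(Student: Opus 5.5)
We compute $T_C(t)$ directly from the master Steiner formula \eqref{eq:master-steiner}, expressing the volume of $U_C(t)$ as a Gaussian expectation. Because $C$ is a cone, Lemma~\ref{lem:cone-projection} gives $|y|^2=|\Pi_C(y)|^2+\dist(y,C)^2$. The indicator of $U_C(t)$ is $\mathbf 1\{|\Pi_C(y)|^2<1,\ 0<\dist(y,C)^2<t^2\}$, which is a function $g(|\Pi_C(y)|^2,\dist(y,C)^2)$ of the two squared quantities. We therefore apply \eqref{eq:master-steiner} to the bounded Borel function
\[
 f(a,b)=(2\pi)^{(n+1)/2}e^{(a+b)/2}\,\mathbf 1\{a<1,\ 0<b<t^2\},
\]
which is bounded because $a<1$ and $b<t^2$ on its support. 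Since the Gaussian density of $G$ is $(2\pi)^{-(n+1)/2}e^{-|y|^2/2}$ and $|y|^2=a+b$, we get $\mathbb E f(|\Pi_C(G)|^2,\dist(G,C)^2)=\Vol_{n+1}(U_C(t))=T_C(t)$.

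We then evaluate the right-hand side term by term. For each $k$, the pair $(G_k,H_{n+1-k})$ is a standard Gaussian in $\R^k\times\R^{n+1-k}$, and $e^{(|G_k|^2+|H|^2)/2}$ cancels its density. Hence
\[
 \mathbb E f(|G_k|^2,|H_{n+1-k}|^2)=\Vol_k\{|u|<1\}\cdot\Vol_{n+1-k}\{0<|w|<t\}=\omega_k\,\omega_{n+1-k}\,t^{n+1-k},
\]
with the conventions $\omega_0=1$. When $k=n+1$ the second factor is the volume of $\{0<|w|<t\}$ in $\R^0$, which is empty, so this term vanishes. This matches the fact that $T_C$ has no constant term.

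Consequently
\[
 T_C(t)=\sum_{m=1}^{n+1}v_{n+1-m}(C)\,\omega_{n+1-m}\omega_m\,t^m \qquad (t>0).
\]
Two polynomials that agree on $(0,\infty)$ have the same coefficients, so $c_m=\omega_{n+1-m}\omega_m\,v_{n+1-m}(C)$, which is \eqref{eq:cone-coefficients}.

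For \eqref{eq:odd-coefficients}, substitute $k=n+1-m$, so that $m$ odd corresponds to $k\equiv n\pmod 2$. As $m$ runs over the odd integers in $[1,n+1]$, $k$ runs over the integers in $[0,n]$ of that parity. Since $k=n+1$ has parity opposite to $n$, this is the same as the full parity class $\{0\le k\le n+1: k\equiv n \pmod 2\}$. That class is either the even or the odd indices. In either case, Lemma~\ref{lem:conic-gauss-bonnet} says its sum is $\tfrac12$ when $C$ is not a linear subspace.

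The main point needing care is the first step. We must check that the indicator really is a function of $(|\Pi_C|^2,\dist^2)$, that the weighted $f$ is bounded and Borel, and that the degenerate cases $k=0$ and $k=n+1$ are handled correctly.
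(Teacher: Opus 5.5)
Your proposal is correct and follows essentially the same route as the paper. Both use the test function $f_t(a,b)=(2\pi)^{(n+1)/2}e^{(a+b)/2}\mathbf 1_{\{a<1,\,0<b<t^2\}}$, cancel the Gaussian density via Lemma~\ref{lem:cone-projection}, evaluate the right-hand side term by term with the $k=n+1$ term vanishing, and then invoke the conic Gauss--Bonnet formula; your parity bookkeeping for \eqref{eq:odd-coefficients} just makes explicit what the paper leaves implicit.
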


\begin{proof}
Fix $t>0$. To compute $T_C(t)$, we apply the master Steiner
formula \eqref{eq:master-steiner} with the test function
\[
    f_t(a,b)=(2\pi)^{(n+1)/2}e^{(a+b)/2}\mathbf{1}_{\{a<1,\;0<b<t^2\}}.
\]
Since on the support of $f_t$ we have
\[
    0\leq a<1, \qquad 0<b<t^2,
\]
it follows that
\[
    0\leq f_t(a,b)\leq(2\pi)^{(n+1)/2}e^{(1+t^2)/2}.
\]
Thus $f_t$ is indeed a bounded Borel function, so it may be inserted
into the master Steiner formula.

We first calculate the left hand side of \eqref{eq:master-steiner}. For every $y\in\mathbb{R}^{n+1}$, Lemma \ref{lem:cone-projection} gives
\[
    |\Pi_C(y)|^2+\operatorname{dist}(y,C)^2=|y|^2.
\]

Since the density of the standard Gaussian vector $G$ is
\[
    (2\pi)^{-(n+1)/2}e^{-|y|^2/2},
\]
we obtain
\begin{align*}
    &\mathbb{E}f_t\!\left(|\Pi_C(G)|^2,\operatorname{dist}(G,C)^2\right)\\
    =&\int_{\mathbb{R}^{n+1}}(2\pi)^{-(n+1)/2}e^{-|y|^2/2}(2\pi)^{(n+1)/2}e^{\frac{1}{2}\left(|\Pi_C(y)|^2+\operatorname{dist}(y,C)^2\right)}\\
    &\hspace{4cm}\times\mathbf{1}_{\{|\Pi_C(y)|^2<1,\,0<\operatorname{dist}(y,C)^2<t^2\}}\,dy.
\end{align*}
Using
\[
    |\Pi_C(y)|^2+\operatorname{dist}(y,C)^2=|y|^2,
\]
the exponential factors cancel. Therefore
\begin{align}\label{eq:left of steiner formula}
    \mathbb{E}f_t\!\left(|\Pi_C(G)|^2,\operatorname{dist}(G,C)^2\right) =\int_{\mathbb{R}^{n+1}}\mathbf{1}_{\{|\Pi_C(y)|<1,\,0<\operatorname{dist}(y,C)<t\}}\,dy=T_C(t).
\end{align}

Now we turn to the right hand side of \eqref{eq:master-steiner}. Fix $0\leq k\leq n$. Since $G_k$ and $H_{n+1-k}$ are independent, their joint Gaussian density on
$\mathbb{R}^k\times\mathbb{R}^{n+1-k}$ is
\[
    (2\pi)^{-(n+1)/2}\exp\left(-\frac{|x|^2+|z|^2}{2}\right).
\]
Consequently,
\begin{align}\label{eq:right of steiner formula}
    &\mathbb{E}f_t\!\left(|G_k|^2,|H_{n+1-k}|^2\right)\\
    =&\int_{\mathbb{R}^k}\int_{\mathbb{R}^{n+1-k}}(2\pi)^{-(n+1)/2}e^{-(|x|^2+|z|^2)/2}\notag\\
    &\hspace{2.5cm}\times(2\pi)^{(n+1)/2}e^{(|x|^2+|z|^2)/2}\mathbf{1}_{\{|x|^2<1,\,0<|z|^2<t^2\}}\,dz\,dx\notag\\
    =&\int_{|x|<1}dx\int_{0<|z|<t}dz\notag\\
    =&\omega_k\omega_{n+1-k}t^{n+1-k}.\notag
\end{align}
This formula also holds for $k=0$ under the convention
$\omega_0=1$.

If $k=n+1$, then $H_0=0$. Since $f_t(a,b)$ contains the condition
$b>0$, we have
\[
f_t(|G_{n+1}|^2,|H_0|^2)=0,
\]
and therefore
\begin{equation}
    \mathbb{E}f_t(|G_{n+1}|^2,|H_0|^2)=0.\label{eq:right of steiner formula n+1}
\end{equation}

Substituting \eqref{eq:left of steiner formula}, \eqref{eq:right of steiner formula} and \eqref{eq:right of steiner formula n+1} into \eqref{eq:master-steiner} gives
\[
T_C(t)
=
\sum_{k=0}^{n}
\omega_k\omega_{n+1-k}
v_k(C)t^{n+1-k}.
\]

Therefore, if
\[
T_C(t)=\sum_{m=1}^{n+1}c_mt^m,
\]
then
\[
c_m=\omega_{n+1-m}\omega_mv_{n+1-m}(C),\qquad1\leq m\leq n+1.
\]
And \eqref{eq:odd-coefficients} follows from Lemma \ref{lem:conic-gauss-bonnet}.
\end{proof}

\begin{lemma}[The first two nonzero cone coefficients]
\label{lem:cone-bottom-coefficients}
Let $C\subset\R^{n+1}$ be a closed convex cone that is not a
linear subspace, and put
\[
 E=C\cap(-C).
\]
Then $E$ is a linear subspace. Writing $\ell=\dim E$, we have
\[
 v_k(C)=0\quad(0\le k<\ell),\qquad
 v_\ell(C)>0,\qquad v_{\ell+1}(C)>0.
\]
\end{lemma}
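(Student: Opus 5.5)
We will reduce to a pointed cone by splitting off the lineality space $E$, and then read off the vanishing pattern from the Steiner polynomial. First, $E=C\cap(-C)$ is a linear subspace. It is a closed convex cone that is symmetric under $x\mapsto -x$, so it is closed under nonnegative combinations and negation.

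Next we decompose $C=E\oplus C'$ with $C'=C\cap E^\perp$. For $x\in C$, write $x=e+x'$ with $e\in E$ and $x'\in E^\perp$; then $x'=x-e\in C+E\subset C$, since $C+C\subset C$. The cone $C'$ is closed, convex and pointed: $C'\cap(-C')\subset E\cap E^\perp=\{0\}$. It is nonzero because $C$ is not a linear subspace.

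For an orthogonal product of cones, $\Pi_C(y)=\Pi_E(y_E)+\Pi_{C'}(y')$ and $\dist(y,C)=\dist(y',C')$. Applying the master Steiner formula \eqref{eq:master-steiner} in $E^\perp$ gives the product rule
\[
v_k(C)=v_{k-\ell}(C'),
\]
with $v_j(C')$ computed in $E^\perp\cong\R^{n+1-\ell}$. To see this, test with $f(a,b)=g(a)h(b)$ and use the fact that on $E$ all Gaussian mass sits in degree $\ell$, since $\Pi_E(G_E)=G_E$. This immediately gives $v_k(C)=0$ for $k<\ell$. It remains to show $v_0(C')>0$ and $v_1(C')>0$ for a nonzero pointed closed convex cone $C'$ in $\R^d$ with $d=n+1-\ell\ge1$.

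\textbf{$v_0(C')>0$.} By \eqref{eq:master-steiner} with $f=\mathbf 1_{\{a=0\}}$, we get $v_0(C')=\mathbb P(\Pi_{C'}(G)=0)=\mathbb P(G\in C'^\circ)$, where $C'^\circ$ is the polar cone. For a pointed cone the polar has nonempty interior: there is $u$ with $\langle u,x\rangle<0$ for all $x\in C'\setminus\{0\}$, obtained by separating $0$ from the compact convex base $\mathrm{conv}(C'\cap S^{d-1})$, which does not contain $0$ precisely by pointedness. Hence $C'^\circ$ has positive Gaussian measure.

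\textbf{$v_1(C')>0$.} This is the step we expect to be the main obstacle, since it needs a face of dimension exactly one. We use $f=\mathbf 1_{\{0<a\}}$ together with a finer description. By the standard face decomposition, $v_1(C')$ equals the Gaussian measure of $\bigcup_F\bigl(\mathrm{relint}F+N_F\bigr)$ over the one-dimensional faces, i.e.\ extreme rays, $F$ of $C'$. Here $N_F$ is the normal cone of $C'$ at a relative interior point of $F$.

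A pointed nonzero closed cone has an extreme ray: its compact base has an extreme point by Krein--Milman. We must still show that some extreme ray has a normal cone of full dimension $d-1$, so that $\mathrm{relint}F+N_F$ has positive volume. Exposed rays suffice for this, and by Straszewicz's theorem exposed extreme points of the base exist. A cleaner route to positivity is to pick $y$ in the interior of $\mathrm{relint}F+N_F$ for an exposed ray where the base has a "corner". If this is delicate, we fall back on an alternative argument: if $v_1(C')=0$, then a.s.\ $\Pi_{C'}(G)$ lies in a face of dimension $\ge2$ or is $0$. We would then derive a contradiction via the Gauss--Bonnet parity relation of Lemma~\ref{lem:conic-gauss-bonnet} combined with an induction on $d$, with base case $d=1$ (a ray, where $v_0=v_1=\tfrac12$).

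Finally, transporting back via $v_{\ell}(C)=v_0(C')$ and $v_{\ell+1}(C)=v_1(C')$ finishes the proof.
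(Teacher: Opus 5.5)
\textbf{The step $v_1(C')>0$ has a genuine gap.} Your reduction to the pointed cone $C'=C\cap E^\perp$, the product rule $v_k(C)=v_{k-\ell}(C')$, and the proof of $v_0(C')>0$ through the interior of the polar cone are correct. The gap matters, because $v_{\ell+1}(C)>0$ is exactly what is used later, in the choice $j\in\{k,k+1\}$ in Proposition~\ref{prop:equality-geometric}.

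The identity you invoke, $v_1(C')=\mathbb P\bigl(G\in\bigcup_F(\mathrm{relint}F+N_F)\bigr)$ over extreme rays $F$, is the polyhedral definition. It does not extend to general closed convex cones, because a curved boundary contributes to several intrinsic volumes at once. The recession cones here are typically curved; for instance $\rec(K)=\{z\ge a|x|\}$ in Example~\ref{ex:strict-inequality}.

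Concretely, take $C=\{x\in\R^3:x_3\ge|x|\cos\alpha\}$. Every boundary ray is extreme, with a one-dimensional normal cone. So no extreme ray has a normal cone of dimension $d-1=2$, and each set $\mathrm{relint}F+N_F$ is null. Yet the union has probability $1-v_0-v_3=v_1+v_2$, where $v_1=\tfrac12\cos\alpha$ and $v_2=\tfrac12\sin\alpha$. So both the formula and your sub-goal fail, and Krein--Milman or Straszewicz cannot repair them.

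The fallback is not an argument either. Gauss--Bonnet only fixes the sum of the odd-indexed volumes, which gives no lower bound on $v_1$: in the example, $v_1\to0$ as $\alpha\to\pi/2$ while the relation persists. The claimed almost-sure face dichotomy again presupposes the polyhedral picture.

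\textbf{Two ways to close it.} The paper simply cites \cite[Theorem~4.5.1]{SchneiderCones2022}: $v_k(C)=0$ iff $k<\dim(C\cap(-C))$ or $k>\dim\operatorname{span}C$. Since $C\supsetneq E$, one has $\dim\operatorname{span}C\ge\ell+1$, and the conclusion follows.

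If you want to stay self-contained, there is an elementary fix within your framework.
\begin{itemize}
\item Apply the master Steiner formula with $f=\mathbf 1_{\{0<a<\varepsilon^2\}}$. This gives $\mathbb P(0<|\Pi_{C'}(G)|<\varepsilon)=\sqrt{2/\pi}\,v_1(C')\,\varepsilon+O(\varepsilon^2)$, so it suffices to bound this probability below by $c\varepsilon$.
\item Take $B(u,\rho)\subset C'^\circ$ (available from your $v_0$ step) and $w\in C'\setminus\{0\}$. For $h\perp w-u$ with $|h|<\min(\rho,|w|/2)$, consider the segment from $u+h$ to $w+h$.
\item On that segment $|\Pi_{C'}|$ is $1$-Lipschitz, vanishes at the start, and exceeds $|w|/2$ at the end. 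Hence, for $\varepsilon<|w|/2$, it takes values in $(0,\varepsilon)$ on a subsegment of length at least $\varepsilon$.
\item Integrating over $h$ by Fubini, and using that the Gaussian density is bounded below on bounded sets, gives the bound $c\varepsilon$.
\end{itemize}
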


\begin{proof}
    The fact that $E$ is a linear subspace follows directly from the definition.
    
	By \cite[Theorem~4.5.1, p.~186,
		equation~(4.105)]{SchneiderCones2022}, a closed convex cone $C$ that is not a
	linear subspace satisfies
	\[
	v_k(C)=0
	\quad\Longleftrightarrow\quad
	k<\dim(C\cap(-C))\ \text{or}\ k>\dim C,
	\qquad 0\le k\le n+1,
	\]
	where $\dim C=\dim\operatorname{span}C$.
	Since $C$ contains $E=C\cap(-C)$ and is not a linear subspace,
	we have $\dim C\ge\ell+1$. Together with the nonnegativity
	of the conic intrinsic volumes, this proves the conclusion.
\end{proof}

\begin{prop}[The curvature bound and its deficit]
\label{prop:nonlinear-bound}
Suppose that $C=\rec(K)$ is not a linear subspace. Then
\begin{equation}\label{eq:intrinsic-cone-values}
 V_M=2v_n(C),\qquad
 L(M)=8\pi\omega_{n-2}v_{n-2}(C).
\end{equation}
Moreover,
\begin{equation}\label{eq:exact-deficit}
 4\pi\omega_{n-2}(1-V_M)-L(M)
 =8\pi\omega_{n-2}
 \sum_{\substack{0\le k\le n-4\\k\equiv n\pmod2}}v_k(C),
\end{equation}
where the sum is empty when $n=3$. In particular,
\begin{equation}
    L(M)\le4\pi\omega_{n-2}(1-V_M).\label{eq:lm upper bound}
\end{equation}
\end{prop}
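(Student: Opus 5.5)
The plan is to combine Corollary~\ref{cor:intrinsic-limits} with Lemma~\ref{lem:cone-coefficients}, and then use the conic Gauss--Bonnet relation to rewrite the deficit. By Corollary~\ref{cor:intrinsic-limits}, $V_M=c_1/\omega_n$ and $L(M)=6c_3$, where $c_m$ are the coefficients of $T_C$. Lemma~\ref{lem:cone-coefficients} with $m=1$ gives $c_1=\omega_n\omega_1 v_n(C)=2\omega_n v_n(C)$, hence $V_M=2v_n(C)$. With $m=3$ it gives $c_3=\omega_{n-2}\omega_3 v_{n-2}(C)=\tfrac{4\pi}{3}\omega_{n-2}v_{n-2}(C)$, so $L(M)=8\pi\omega_{n-2}v_{n-2}(C)$. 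This proves \eqref{eq:intrinsic-cone-values}. Here $n\ge3$ ensures $3\le n+1$, so the index $m=3$ is admissible.

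For the deficit, I use Lemma~\ref{lem:conic-gauss-bonnet}, which applies because $C$ is not a linear subspace. Consider the indices $k$ with $k\equiv n\pmod 2$ and $0\le k\le n+1$; these are exactly $k=n,n-2,n-4,\dots$. Among the even-indexed and odd-indexed sums in \eqref{eq:conic-gauss-bonnet}, this is one of the two, whichever parity $n$ has. In either case
\[
 \sum_{\substack{0\le k\le n\\ k\equiv n\ (\mathrm{mod}\ 2)}} v_k(C)=\tfrac12 .
\]
Separating the terms $k=n$ and $k=n-2$ gives
\[
 v_n(C)+v_{n-2}(C)+\sum_{\substack{0\le k\le n-4\\k\equiv n\ (\mathrm{mod}\ 2)}}v_k(C)=\tfrac12 .
\]
When $n=3$, the term $k=n-2=1$ is present and the remaining sum is empty.

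Multiplying by $8\pi\omega_{n-2}$ and substituting $v_n(C)=V_M/2$ and $8\pi\omega_{n-2}v_{n-2}(C)=L(M)$ yields
\[
 4\pi\omega_{n-2}V_M+L(M)+8\pi\omega_{n-2}\sum v_k(C)=4\pi\omega_{n-2}.
\]
Rearranging this gives \eqref{eq:exact-deficit}. The bound \eqref{eq:lm upper bound} then follows from the nonnegativity of the conic intrinsic volumes.

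There is no real obstacle here. The only delicate point is the parity bookkeeping: the sum over $k\equiv n\pmod 2$ must contain $n$ and $n-2$ and otherwise only indices $\le n-4$, and $v_{n+1}$ must not appear, which holds because $n+1\not\equiv n$.
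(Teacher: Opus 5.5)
Your proposal is correct and matches the paper's proof: you read off $V_M=2v_n(C)$ and $L(M)=8\pi\omega_{n-2}v_{n-2}(C)$ from Corollary~\ref{cor:intrinsic-limits} and Lemma~\ref{lem:cone-coefficients} using $\omega_1=2$ and $\omega_3=4\pi/3$, then split the parity-$n$ half of the conic Gauss--Bonnet sum as $v_n+v_{n-2}$ plus the tail and substitute. Your parity remarks (that $v_{n+1}$ does not appear and that the tail is empty for $n=3$) spell out bookkeeping the paper leaves implicit.
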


\begin{proof}
Corollary~\ref{cor:intrinsic-limits} gives
$V_M=c_1/\omega_n$ and $L(M)=6c_3$.
Lemma~\ref{lem:cone-coefficients}, together with
$\omega_1=2$ and $\omega_3=4\pi/3$, gives
\eqref{eq:intrinsic-cone-values}.
Since $C$ is not a linear subspace, the conic Gauss--Bonnet
formula gives
\[
 v_n(C)+v_{n-2}(C)
 +\sum_{\substack{0\le k\le n-4\\k\equiv n\pmod2}}v_k(C)
 =\frac12.
\]
Substituting \eqref{eq:intrinsic-cone-values} proves
\eqref{eq:exact-deficit}.

The nonnegativity of the conic intrinsic volumes gives the stated
bound.
\end{proof}

\section{Product decomposition and linear recession cones}\label{sec:decomposition-linear}

We first give a common product decomposition of $K$.
It will allow us to state the equality criterion entirely in
terms of $K$.

\begin{lemma}[Product decomposition]\label{lem:product}
	Under the hypotheses of Theorem~\ref{thm:main}, put
	$C=\rec(K)$, $E=C\cap(-C)$, and $k=\dim E$. Then $0\le k\le n$,
	and, in the orthogonal decomposition $\R^{n+1}=E^\perp\times E$,
	\begin{equation}\label{eq:product}
		K=D_0\times E,\qquad M=\Sigma\times E,
		\qquad D_0=K\cap E^\perp,\quad\Sigma=\partial_{E^\perp}D_0.
	\end{equation}
	The set $D_0$ is closed and convex, has nonempty interior in
	$E^\perp$, and contains no complete affine line. The hypersurface
	$\Sigma$ in $E^\perp$ is smooth, connected, and complete
	(possibly zero-dimensional), and $M$ has the product metric. Moreover,
	\[
	\rec(K)=\rec(D_0)\times E,
	\]
	and $D_0$ is compact if and only if $C$ is a linear subspace.
	In that case, $E=C$ and $1\le k\le n-1$.
\end{lemma}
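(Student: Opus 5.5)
The proof combines the standard lineality-space splitting of a closed convex set with the smoothness and topology of $M$. I will carry it out in five steps.

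\textbf{Step 1 (splitting).} $E=C\cap(-C)$ is a linear subspace; it is the lineality space of $K$. By definition of $\rec(K)$, for every $x\in K$ and $v\in E$ we have $x+tv\in K$ for all $t\in\R$, so $K+E=K$. Writing $x=x'+x''$ with $x'\in E^\perp$ and $x''\in E$ gives $x'=x-x''\in K$. Hence $K=(K\cap E^\perp)+E=D_0\times E$. The set $D_0$ is closed and convex as an intersection of such sets. Since $\operatorname{int}K=\operatorname{int}_{E^\perp}D_0\times E$, it has nonempty interior in $E^\perp$.

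Taking boundaries of the product gives $M=\partial K=\partial_{E^\perp}D_0\times E=\Sigma\times E$. This is a Riemannian product, because the embedding is the product of the inclusions $\Sigma\subset E^\perp$ and $E=E$.

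\textbf{Step 2 (recession cone and lines).} For $v=v'+v''$ with $v'\in E^\perp$ and $v''\in E$, we have $v\in\rec(K)$ iff $x'+tv'\in D_0$ for all $x'\in D_0$ and $t\ge0$. The $E$-component is unconstrained. This gives $\rec(K)=\rec(D_0)\times E$.

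If $D_0$ contained a complete affine line with direction $w\ne0$, then $\pm w\in\rec(D_0)$. This uses the standard fact that a closed convex set containing a line in direction $w$ has $\pm w$ in its recession cone, which follows from the closedness argument of Lemma~\ref{lem:recession}. Then $w\in E\cap E^\perp=\{0\}$, a contradiction.

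\textbf{Step 3 (smoothness and completeness of $\Sigma$).} $\Sigma=M\cap E^\perp$ is a slice of the product, so it is smooth. It is a closed subset of $E^\perp$, hence complete. It is connected because $M\cong\Sigma\times E$ is connected. We have $\dim\Sigma=n-k$.

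\textbf{Step 4 (range of $k$).} Here $k\le n+1$. If $k=n+1$, then $K=\R^{n+1}$ and $M=\emptyset$, which is excluded, so $k\le n$. When $k=n$, $D_0$ is a closed interval in a line, not the whole line and containing no line. It is therefore a half-line, since $D_0$ compact would make $\Sigma$ two points and $M$ disconnected. Then $\Sigma$ is one point, which is why the statement allows a zero-dimensional $\Sigma$.

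\textbf{Step 5 (compactness criterion).} A closed convex set is compact iff its recession cone is $\{0\}$. If $D_0$ is compact, then $C=\{0\}\times E=E$ is linear.

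Conversely, suppose $C$ is linear. Then $C=C\cap(-C)=E$, so $\rec(D_0)\times E=E$, which forces $\rec(D_0)=\{0\}$; thus $D_0$ is compact.

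In that case I bound $k$. First, $k\le n-1$: otherwise $k=n$ and compact $D_0$ would be a segment, giving disconnected $M$. Second, $k\ge1$: if $k=0$ then $K=D_0$ is compact, so $M$ is compact, contradicting noncompactness.

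The main obstacle is making sure that connectedness of $M$ rules out the degenerate one-dimensional cases, and handling boundaries of products correctly. Everything else is routine convex analysis.
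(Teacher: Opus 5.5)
Your proof is correct and follows essentially the same route as the paper: you split $K=D_0\times E$ using $K+E=K$, prove $\rec(K)=\rec(D_0)\times E$, rule out lines via $\pm w\in\rec(D_0)$, and use connectedness and noncompactness of $M$ to exclude $k=0$ and $k=n$ when $D_0$ is compact. The differences are minor: you cite the standard fact that a closed convex set is compact iff its recession cone is $\{0\}$, whereas the paper proves the needed direction with a normalized-sequence argument, and your ``slice'' justification for the smoothness of $\Sigma$ implicitly uses $E\subset T_xM$ (from invariance of $M$ under translations by $E$), which the paper uses explicitly via the implicit function theorem.
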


\begin{proof}
	The fact that $E$ is a linear subspace follows from the definition. For every $v\in E$, both $v$
	and $-v$ are recession directions, so $K+v=K$. Subtracting the
	$E$-component of each point of $K$ therefore gives
	$K=D_0\times E$. The set $D_0$ is closed and convex. Projecting
	an open ball contained in $K$ onto $E^\perp$ shows that $D_0$
	has nonempty interior in $E^\perp$. The definition of a recession
	direction now gives
	\[
	\rec(K)=\rec(D_0)\times E,
	\qquad \rec(D_0)=C\cap E^\perp.
	\]
	
	To see that $D_0$ contains no complete affine line, suppose that
	$a+\R v\subset D_0$ for some nonzero $v\in E^\perp$. For any
	$z\in D_0$, $t\ge0$, and $s>t$, convexity gives
	\[
	(1-t/s)z+(t/s)(a\pm sv)\in D_0.
	\]
	Letting $s\to\infty$ shows that $z\pm tv\in D_0$. Thus
	$\pm v\in\rec(D_0)$, which implies
	$v\in E\cap E^\perp=\{0\}$, a contradiction.
	
	If $D_0$ is compact, then $\rec(D_0)=\{0\}$, and hence $C=E$ is a linear subspace.
	Conversely, if $D_0$ is unbounded, choose $x_0\in D_0$
	and a sequence $x_j\in D_0$ with $|x_j-x_0|\to\infty$.
	After passing to a subsequence,
	$(x_j-x_0)/|x_j-x_0|\to u$ with $|u|=1$. Convexity and
	closedness give $x_0+tu\in D_0$ for every $t\ge0$.
	By translating $x_0$ to the origin, Lemma~\ref{lem:recession} then shows
	that $u\in\rec(D_0)$. Since $D_0$ contains no complete affine line,
	$-u\notin\rec(D_0)$. Thus $C=\rec(D_0)\times E$ is not a linear subspace. 
	
	Taking boundaries in the product decomposition gives
	$M=\Sigma\times E$. The normal to $M$ is perpendicular to $E$,
	so the implicit function theorem shows that the section
	$\Sigma=M\cap E^\perp$ is smooth. Orthogonality gives the
	product metric; connectedness and completeness of $\Sigma$
	follow from those of $M$.
	The case $k=n+1$ would give $K=\R^{n+1}$, so $k\le n$.
	If $D_0$ is compact, $k=0$ would make $M$ compact, while $k=n$
	would make $D_0$ a nondegenerate compact interval and $M$
	disconnected. Hence $1\le k\le n-1$ in the compact case.
\end{proof}

Suppose that $C=\rec(K)=L$ is a $k$-dimensional linear subspace.
Lemma~\ref{lem:product} gives the decomposition
\[
 K=D\times L,\qquad M=\Sigma\times L,\qquad\Sigma=\partial D,
\]
where $D\subset L^\perp$ is compact with nonempty interior,
$\Sigma$ is smooth and connected, and $1\le k\le n-1$.
The metric on $M$ is the product metric. We now compute the
curvature integral in this case.

\begin{prop}[The integral in the product case]\label{prop:product-limit}
With the notation above, $V_M=0$ and
\begin{equation}\label{eq:product-limit}
 \lim_{r\to\infty}r^{2-n}\int_{B_p^M(r)}R\dd V
 =\begin{cases}
 8\pi\omega_{n-2},&k=n-2,\\
 0,&k\ne n-2.
 \end{cases}
\end{equation}
\end{prop}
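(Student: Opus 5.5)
We compute everything through the product structure $M=\Sigma\times L$, where $\Sigma=\partial D$ is a smooth closed convex hypersurface of dimension $n-k$ in $L^\perp\cong\R^{n+1-k}$ and $L\cong\R^k$. By Corollary~\ref{cor:intrinsic-limits} the limit exists and is independent of $p$. We could read off $c_1$ and $c_3$ from $T_C$ with $C=L$ a subspace. It is more transparent, however, to compute directly with intrinsic balls in the product metric. Write $p=(\sigma_0,0)$ and $\Delta=\operatorname{diam}_\Sigma(\Sigma)<\infty$. The product metric gives $d_M(p,(\sigma,y))^2=d_\Sigma(\sigma_0,\sigma)^2+|y|^2$. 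Hence
\[
\Sigma\times B^L(r-\Delta)\subset B_p^M(r)\subset \Sigma\times B^L(r)
\]
for $r>\Delta$.

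For the volume, these inclusions give $\Vol_M(B_p^M(r))\le \Vol(\Sigma)\,\omega_k r^k$. Since $k\le n-1<n$, dividing by $\omega_n r^n$ gives $V_M=0$.

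For the curvature, the product metric has $R_M(\sigma,y)=R_\Sigma(\sigma)$, the scalar curvature of $\Sigma$. Since $R\ge0$, the inclusions give
\[
\omega_k(r-\Delta)^k\int_\Sigma R_\Sigma\dd V
\le\int_{B_p^M(r)}R\dd V
\le\omega_k r^k\int_\Sigma R_\Sigma\dd V .
\]
Multiply by $r^{2-n}$. The exponent becomes $k+2-n$. We split into three cases.

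\emph{Case $k<n-2$.} The exponent is negative and $\int_\Sigma R_\Sigma$ is finite, so the limit is $0$.

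\emph{Case $k=n-1$.} Then $\Sigma$ is one-dimensional. More directly, the shape operator of $M$ has rank at most $n-k=1$, so $\sigma_2(A)=0$ and $R\equiv0$. The limit is again $0$.

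\emph{Case $k=n-2$.} The exponent is $0$ and both bounds tend to $\omega_{n-2}\int_\Sigma R_\Sigma\dd V$. Here $\Sigma$ is a smooth closed connected convex surface in $\R^3$, so $R_\Sigma=2K_\Sigma$. Gauss--Bonnet gives $\int_\Sigma K_\Sigma=2\pi\chi(\Sigma)$. The Gauss map of a closed convex surface with $K\ge0$ is a degree-one map onto $S^2$, so $\chi(\Sigma)=2$. Alternatively, $\Sigma$ is the boundary of a convex body and hence homeomorphic to $S^2$ by radial projection from an interior point. Thus $\int_\Sigma R_\Sigma=8\pi$ and the limit is $8\pi\omega_{n-2}$.

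The only point needing care is the identification $R_M=R_\Sigma$ and the topology of $\Sigma$. The first holds because the scalar curvature of a Riemannian product with a flat factor equals that of the other factor; equivalently, $A$ vanishes on $E$-directions, so $\sigma_2(A)$ equals $\sigma_2$ of the shape operator of $\Sigma$. The second follows because $D$ is compact with nonempty interior in $L^\perp$ (Lemma~\ref{lem:product}), so radial projection from an interior point is a homeomorphism $\Sigma\to S^{n-k}$. Connectedness of $\Sigma$ is already supplied by Lemma~\ref{lem:product}. Nothing else appears to be an obstacle.
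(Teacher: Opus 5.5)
Your proof is correct and follows the paper's argument. The ingredients match: the product structure $M=\Sigma\times L$ with $R=R_\Sigma$, the leading-order growth $\omega_k r^k\int_\Sigma R_\Sigma\,dV_\Sigma$ of the curvature integral, the split into cases by $k$, and Gauss--Bonnet with $\chi(\Sigma)=2$ when $k=n-2$. The one cosmetic difference is that you get the $r^k$ asymptotics by sandwiching $B_p^M(r)$ between $\Sigma\times B^L(r-\Delta)$ and $\Sigma\times B^L(r)$. The paper instead writes the exact Fubini formula with weight $\bigl(r^2-d_\Sigma(p_\Sigma,x)^2\bigr)_+^{k/2}$ and applies dominated convergence.
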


\begin{proof}
Identify $L$ with $\R^k$ and take $p=(p_\Sigma,0)$. The scalar
curvature of the product is $R_\Sigma$. The product-distance
formula and Fubini's theorem yield
\begin{equation}\label{eq:fubini}
 \int_{B_p^M(r)}R\dd V
 =\omega_k\int_\Sigma R_\Sigma(x)
 \bigl(r^2-d_\Sigma(p_\Sigma,x)^2\bigr)_+^{k/2}\dd V_\Sigma(x),
\end{equation}
where $s_+=\max\{s,0\}$. Compactness of $\Sigma$ and dominated
convergence give
\begin{equation}\label{eq:product-leading}
 \lim_{r\to\infty}r^{-k}\int_{B_p^M(r)}R\dd V
 =\omega_k\int_\Sigma R_\Sigma\dd V_\Sigma.
\end{equation}
Also,
\[
 \Vol_M(B_p^M(r))\le\Vol(\Sigma)\omega_k r^k,
\]
so $V_M=0$ because $k<n$.

If $k\le n-3$, \eqref{eq:product-leading} gives an $O(r^k)$
curvature integral, and $k+2-n<0$ proves the zero limit in
\eqref{eq:product-limit}. If $k=n-1$, the factor $\Sigma$ is
one-dimensional, so $R_\Sigma=0$ and the same conclusion holds.
If $k=n-2$, then $D$ is a three-dimensional convex body and
$\Sigma=\partial D$ is a smooth closed convex surface. Thus the Gauss--Bonnet theorem gives
\[
 \int_\Sigma R_\Sigma\dd V_\Sigma
 =2\int_\Sigma K_\Sigma\dd V_\Sigma
 =4\pi\chi(\Sigma)=8\pi.
\]
Now \eqref{eq:product-leading} proves \eqref{eq:product-limit}.
\end{proof}

\section{Completion of main theorem and examples}
\label{sec:equality}

\begin{prop}[Equality case]\label{prop:equality-geometric}
Under the hypotheses of Theorem~\ref{thm:main}, the equality
\begin{equation}\label{eq:equality-criterion}
 L(M)=4\pi\omega_{n-2}(1-V_M)
\end{equation}
holds precisely in one of the following cases:
\begin{enumerate}
\item
$(M,g)$ is isometric to $\mathbb{R}^n$.

\item
After an ambient rigid motion,
\begin{equation}
    M=\partial D_0\times\mathbb{R}^k\subset\mathbb{R}^{n+1},\label{eq:geometric-equality}
\end{equation}
where $n-3\leq k\leq n-2$ and $D_0\subset\mathbb{R}^{n+1-k}$ is a closed noncompact convex set with nonempty interior that contains no complete affine line.
\end{enumerate}
\end{prop}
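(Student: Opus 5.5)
We split according to whether $C=\rec(K)$ is a linear subspace. If $C$ is a linear subspace, Proposition~\ref{prop:product-limit} gives $V_M=0$, so the right side of \eqref{eq:equality-criterion} is $4\pi\omega_{n-2}$. The left side is $8\pi\omega_{n-2}$ when $k=n-2$ and $0$ otherwise. Neither value equals $4\pi\omega_{n-2}$, so equality never occurs in this case. We check that neither listed case has linear recession cone. In case (1), an isometry with $\R^n$ forces $V_M=1$, while the product case has $V_M=0$. In case (2), $D_0$ is noncompact, so by Lemma~\ref{lem:product} the cone $C$ is not a subspace. So it suffices to treat $C$ not a linear subspace.

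Assume now that $C$ is not a linear subspace, and let $E=C\cap(-C)$ with $\ell=\dim E$. By \eqref{eq:exact-deficit} and nonnegativity, equality holds iff $v_j(C)=0$ for every $0\le j\le n-4$ with $j\equiv n \pmod 2$. By Lemma~\ref{lem:cone-bottom-coefficients}, $v_j=0$ for $j<\ell$ and $v_\ell,v_{\ell+1}>0$. If $\ell\le n-4$, then one of $\ell,\ell+1$ is $\le n-4$ and has the parity of $n$, so equality fails. If $\ell\ge n-3$, every index $j\le n-4$ satisfies $j<\ell$, so all the relevant $v_j$ vanish and equality holds. Hence equality is equivalent to $\ell\ge n-3$. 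Since $C$ is not a subspace, $\ell\le n$. Thus $\ell\in\{n-3,n-2,n-1,n\}$.

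Next translate this into geometry using Lemma~\ref{lem:product}, which gives $K=D_0\times E$ with $D_0\subset E^\perp\cong\R^{n+1-\ell}$ closed, noncompact (because $C$ is not a subspace), with nonempty interior and no complete line.
\begin{itemize}
\item If $\ell\in\{n-3,n-2\}$, this is exactly case (2) with $k=\ell$.
\item If $\ell=n$, then $D_0$ is a closed half-line in $\R^1$, so $M$ is a hyperplane, which is case (1).
\item If $\ell=n-1$, then $D_0\subset\R^2$ is a noncompact convex set without lines, and $\Sigma=\partial D_0$ is a complete connected curve. Hence $M=\Sigma\times\R^{n-1}$ is flat and simply connected (since $\Sigma\cong\R$), so $M$ is isometric to $\R^n$, again case (1).
\end{itemize}

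For the converse, first take case (2). Here $\rec(K)=\rec(D_0)\times\R^k$, and $\rec(D_0)\neq\{0\}$ because $D_0$ is noncompact. Also $\rec(D_0)$ contains no line, so $E=\R^k$ and $\ell=k\ge n-3$, giving equality. Now take case (1). If $M$ is isometric to $\R^n$, then $V_M=1$ and $R\equiv0$, so $L(M)=0=4\pi\omega_{n-2}(1-V_M)$ directly.

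The main obstacle is the bookkeeping between the cone-theoretic condition $\ell\ge n-3$ and the geometric list. In particular, one must make sure that the $\ell=n-1$ and $\ell=n$ cases really land in case (1), and that case (1) cannot arise with a linear recession cone. That last point is settled by the $V_M$ values, since flat $\R^n$ has $V_M=1$.
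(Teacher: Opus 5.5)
Your proof is correct and uses the same ingredients as the paper: Proposition~\ref{prop:product-limit} when $C$ is linear, the deficit identity \eqref{eq:exact-deficit} combined with Lemma~\ref{lem:cone-bottom-coefficients}, and Lemma~\ref{lem:product}. The only difference is organizational. The paper splits by flat versus nonflat $M$ and handles the flat case with Bishop--Gromov rigidity. You split by whether $C$ is linear, show that equality holds exactly when $\ell\ge n-3$, and identify the flat cases $\ell\in\{n-1,n\}$ with $\R^n$ directly.

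For $\ell=n-1$, your claim $\Sigma\cong\R$ still needs one line of justification. It holds because the boundary of an unbounded closed convex proper subset of $\R^2$ with nonempty interior is unbounded. Alternatively, argue as the paper does: equality together with $L(M)=0$ forces $V_M=1$, and Bishop--Gromov rigidity then gives $M\cong\R^n$.
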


\begin{proof} Suppose first the equality \eqref{eq:equality-criterion} holds. If $M$ is flat, then its scalar curvature vanishes and thus $L(M)=0$. So $V_M=1$. The rigidity part of Bishop-Gromov's volume comparison theorem then implies $(M,g)$ is isometric to $\mathbb{R}^n$. 

If $M$ is not flat, Lemma~\ref{lem:product} gives $K=D_0\times E$, where $D_0$ is closed convex, has nonempty interior in $E^\perp$, and contains no complete affine line. Let $k=\mathrm{dim}E$. If $D_0$ is
compact, Proposition~\ref{prop:product-limit} gives $V_M=0$ and
$L(M)\in\{0,8\pi\omega_{n-2}\}$. Neither value equals
$4\pi\omega_{n-2}$, so equality is impossible.

Suppose that $D_0$ is noncompact. Lemma~\ref{lem:product} then
shows that $C=\rec(K)$ is not a linear subspace and $k\le n$. If $k=n$ or $k=n-1$, then $\partial D_0$ is $0$ or $1$ dimensional respectively. Hence $M$ is flat, which contradicts our hypothesis. So $k\leq n-2$.

For $n=3$, $k\ge n-3=0$ is automatic. For $n\geq 4$, suppose $0\le k\le n-4$. Choose
$j\in\{k,k+1\}$ with the same parity as $n$. Since
$n$ and $n-3$ has the opposite parity, $0\le j\le n-4$.
Lemma~\ref{lem:cone-bottom-coefficients} gives $v_j(C)>0$,
so the right hand side in \eqref{eq:exact-deficit} is strictly positive, which contradicts \eqref{eq:equality-criterion}. So $k\geq n-3$.

Conversely, if $(M,g)$ is isometric to $\mathbb{R}^n$, then \eqref{eq:equality-criterion} holds evidently.

If \eqref{eq:geometric-equality} holds with the stated properties, the absence of a complete affine line
in $D_0$ implies
$\rec(D_0)\cap(-\rec(D_0))=\{0\}$. Hence
$C\cap(-C)=\{0\}\times\R^k$. So Lemma~\ref{lem:cone-bottom-coefficients} then tells us $v_j(C)=0$ for all $j < k$. Since $n-3\leq k\leq n-2$, we have $v_j(C)=0$ for all $j\leq n-4$. Thus the right hand side of \eqref{eq:exact-deficit} vanishes. This proves the equality.
\end{proof}

\begin{proof}[Proof of Theorem~\ref{thm:main}]
Existence, finiteness, and basepoint independence follow from
Corollary~\ref{cor:intrinsic-limits}.
If $C$ is not a linear subspace,
Proposition~\ref{prop:nonlinear-bound} proves the $4\pi\omega_{n-2}(1-V_M)$ bound.
If $C$ is a linear subspace, Lemma~\ref{lem:product} and
Proposition~\ref{prop:product-limit} show that the limit is zero
when $\dim C\neq n-2$, so the same bound holds unless $K$ and $M$ have the stated form
in \eqref{eq:exception-product}. Those products have the values
in \eqref{eq:exception-values}. Proposition~\ref{prop:equality-geometric}
gives the equality case in the theorem.
\end{proof}

The following examples illustrate strict inequality and equality
in \eqref{eq:strong-bound}. In dimensions $n\ge4$, both
behaviors occur for every volume ratio $V_M\in(0,1)$.

\begin{example}[Strict inequality case]\label{ex:strict-inequality}
Fix $n\ge3$ and $a>0$, and consider the rotationally symmetric hypersurface
\[
M_a=\left\{(x,z)\in \mathbb{R}^n\times\mathbb{R}:z=a\sqrt{1+|x|^2}\right\}\subset \mathbb{R}^{n+1}.
\]
Since
\[
D^2\!\left(a\sqrt{1+|x|^2}\right)>0,
\]
the hypersurface $M_a$ is smooth, complete, and strictly convex. We show that, for every $n\geq4$, it satisfies the strict inequality
\[
L(M_a)<4\pi\omega_{n-2}\bigl(1-V_{M_a}\bigr).
\]

Write
\[
\rho=|x|,
\qquad
\beta=\frac{1}{\sqrt{1+a^2}}\in(0,1).
\]
The induced metric on $M_a$ is
\[
g=\left(1+\frac{a^2\rho^2}{1+\rho^2}\right)d\rho^2+\rho^2 g_{S^{n-1}}.
\]
If $s=s(\rho)$ denotes the intrinsic radial distance from the tip
$p=(0,a)$, then
\[
s(\rho)=\int_0^\rho\sqrt{1+\frac{a^2t^2}{1+t^2}}\,dt,
\]
and hence
\[
\lim_{\rho\to\infty}\frac{\rho}{s(\rho)}=\beta.
\]
Consequently,
\begin{equation}
    V_{M_a}=\beta^{n-1}.\label{eq:avr of ma}
\end{equation}

The principal curvatures of $M_a$ consist of one radial principal
curvature
\[
\kappa_r=\frac{a}{\bigl(1+(1+a^2)\rho^2\bigr)^{3/2}},
\]
and a tangential principal curvature of multiplicity $n-1$,
\[
\kappa_t=\frac{a}{\bigl(1+(1+a^2)\rho^2\bigr)^{1/2}}.
\]
Therefore, by the Gauss equation,
\[
R=2(n-1)\kappa_r\kappa_t+(n-1)(n-2)\kappa_t^2.
\]
Since $\rho\sim\beta s$ as $s\to\infty$, we have
\begin{equation}
    R(s)=\frac{(n-1)(n-2)a^2}{s^2}+o(s^{-2}).\label{eq:scalar of ma}
\end{equation}
Moreover, in the radial coordinate $s$,
\[
dV_g=\rho(s)^{n-1}\,ds\,dV_{S^{n-1}},
\]
and thus
\[
\rho(s)^{n-1}=\beta^{\,n-1}s^{n-1}+o(s^{n-1}).
\]
It follows from \eqref{eq:scalar of ma} that
\[
\begin{aligned}
L(M_a)&:=\lim_{r\to\infty}r^{2-n}\int_{B_p^M(r)}R\,dV_g \\
&=(n-1)|S^{n-1}|a^2\beta^{\,n-1}.
\end{aligned}
\]
Using
\[
|S^{n-1}|=2\pi\omega_{n-2},
\qquad
a^2=\frac{1-\beta^2}{\beta^2},
\]
we obtain 
\[
L(M_a)=2\pi(n-1)\omega_{n-2}\beta^{\,n-3}(1-\beta^2).
\]

On the other hand, by \eqref{eq:avr of ma},
\[
4\pi\omega_{n-2}
\bigl(1-V_{M_a}\bigr)
=
4\pi\omega_{n-2}
(1-\beta^{n-1}).
\]
Thus the desired strict inequality is equivalent to
\begin{equation}
    (n-1)\beta^{n-3}(1-\beta^2)<2(1-\beta^{n-1}).
\end{equation}
For $n\ge4$, this follows from
\begin{align*}
    \frac{1-\beta^{n-1}}{n-1}=\int_\beta^1 t^{n-2}\,dt > \beta^{n-3}\int_\beta^1 t\,dt=\frac{\beta^{n-3}(1-\beta^2)}{2}.
\end{align*}
Hence
\[
L(M_a)<4\pi\omega_{n-2}\bigl(1-V_{M_a}\bigr),\qquad n\geq4.
\]

\end{example}

\begin{example}[Equality case]
\label{ex:positive-equality}
Fix $n\ge3$ and $a>0$, and define
\[
 N_a=\{(x,z)\in\R^3\times\R:z=a\sqrt{1+|x|^2}\},
 \qquad M_a=N_a\times\R^{n-3}.
\]
Put $\beta=(1+a^2)^{-1/2}$ as above. Same calculation gives
\[
 V_{M_a}=\beta^2,\qquad
 L(M_a)=4\pi\omega_{n-2}(1-\beta^2).
\]
\end{example}

\section{General codimension}

We now consider embedded orientable convex submanifolds of arbitrary codimension.

\begin{definition}
Let
\[
f:(M^m,g)\longrightarrow \mathbb{R}^{m+q},
\qquad m\geq 2,\quad q\geq 1,
\]
be a smooth isometric embedding of a complete, connected, orientable Riemannian manifold without boundary. Denote its second fundamental form by $B$.

We say that the embedding is \emph{convex} if, for every $x\in M$ and every normal vector
$\xi\in N_xM$, the quadratic form
\[
X\longmapsto \langle B_x(X,X),\xi\rangle,
\qquad X\in T_xM,
\]
is semidefinite, with its sign allowed to depend on $x$ and $\xi$.
\end{definition}

The following is the form of Jonker's structure theorem that will be used here.

\begin{theorem}[{\cite[pp.~610--611]{Jonker}}]
\label{thm:jonker-embedded}
Let
\[
f:(M^m,g)\longrightarrow \mathbb{R}^{m+q},
\qquad m\geq 2,\quad q\geq 1,
\]
be a convex isometric embedding of a complete, connected, orientable Riemannian manifold without boundary. Then at least one of the following alternatives holds.

\begin{enumerate}
\item[(i)]
There exist an $(m+1)$-dimensional affine subspace
$P\subset\mathbb{R}^{m+q}$ and a nonempty open convex set
$\Omega\subset P$ such that
\[
f(M)=\partial_P\Omega,
\]
where $\partial_P$ denotes the boundary relative to $P$.

\item[(ii)]
There exist a complete, connected, one-dimensional Riemannian manifold
$(\Gamma^1,h)$ without boundary, an isometric embedding
\[
\gamma:(\Gamma^1,h)\longrightarrow\mathbb{R}^{q+1},
\]
a Riemannian isometry
\[
\Phi:
(\Gamma^1\times\mathbb{R}^{m-1},
 h\oplus g_{\mathrm{Euc}})
\longrightarrow (M,g),
\]
and an ambient rigid motion $T$ such that, under the orthogonal identification
\[
\mathbb{R}^{m+q}
=
\mathbb{R}^{q+1}\times\mathbb{R}^{m-1},
\]
one has
\[
(T\circ f\circ\Phi)(s,z)
=
\bigl(\gamma(s),z\bigr).
\]
\end{enumerate}

In alternative \emph{(ii)}, the product metric is flat. Since $f$ is assumed to be an embedding, the curve $\gamma$ is embedded as well.
\end{theorem}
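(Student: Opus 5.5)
The statement is quoted from Jonker's paper \cite{Jonker} and is only used here, so I would not reprove it in detail. If I had to, I would follow the classical route for locally convex submanifolds: first a pointwise algebraic reduction of the second fundamental form, then a local analysis via Codazzi and Gauss, and finally a globalization step of Sacksteder--van Heijenoort type. I expect that last step to be the hard one.

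\emph{Pointwise step.} Fix $x\in M$ and consider the linear map $N_xM\ni\xi\mapsto\langle B_x(\cdot,\cdot),\xi\rangle$ into symmetric forms on $T_xM$. By hypothesis its image is a linear space all of whose elements are semidefinite. Such a space has dimension at most one. Indeed, if $S_1\ge0$ and $S_2$ is semidefinite and not proportional to $S_1$, then $S_1-tS_2$ is indefinite for a suitable real $t$, by comparing the values on vectors where the two forms have different ratios. Hence the first normal space is at most one-dimensional, and $B=h\otimes\eta$ with $\eta$ a unit normal and $h$ a semidefinite form. On the open set where $B\neq0$ the line $\mathbb R\eta$ is smooth. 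Orientability of $M$, together with that of $\mathbb R^{m+q}$, will later be used to choose the sign of $\eta$ and of $h$ consistently.

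\emph{Local step.} Let $U_2$ be the open set where $\operatorname{rank}h\ge2$. On $U_2$, the Codazzi equation for $B=h\otimes\eta$ gives $h(Y,Z)\nabla^\perp_X\eta=h(X,Z)\nabla^\perp_Y\eta$ modulo $\eta$. With two independent directions in the image of $h$ this forces $\nabla^\perp\eta=0$. Thus the span of $T_xM$ and $\eta$ is parallel, and each component of $U_2$ lies in a fixed affine $(m+1)$-plane. On the complement, $\operatorname{rank}h\le1$, and the Gauss equation shows that $M$ is flat there with relative nullity at least $m-1$. In that region the nullity leaves are open pieces of affine $(m-1)$-planes, by the standard nullity lemma.

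\emph{Global step (main obstacle).} If $U_2=\emptyset$ everywhere, completeness makes the nullity leaves complete affine $(m-1)$-planes, since the conullity is nonsingular along complete leaves in the Hartman--Nirenberg cylinder theorem. These leaves are then parallel, which produces alternative (ii) with $\gamma$ the orthogonal profile curve; $\gamma$ is embedded because $f$ is. If $U_2\neq\emptyset$, the difficulty is to show that the hyperplane $P$ found on one component of $U_2$ contains all of $f(M)$. The danger is that the flat cylindrical regions could ``bend'' the normal line outside $P$. I would argue that along a nullity leaf the normal line $\mathbb R\eta$ is constant, and that the boundary of $U_2$ is reached along such leaves, so $P$ propagates across flat regions. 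Connectedness then gives $f(M)\subset P$. Finally, $f(M)$ is a complete, connected, locally convex hypersurface of $P$ that has a point of rank at least $2$. Sacksteder's theorem, as extended by van Heijenoort, then shows it bounds an open convex set $\Omega\subset P$, which is alternative (i). The delicate points are the propagation of $P$ through the flat set and the verification of the hypotheses of van Heijenoort's theorem (consistent orientation of the local support hyperplanes). I would handle these exactly as in \cite[pp.~610--611]{Jonker}.
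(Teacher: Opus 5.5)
The paper gives no proof of this statement; it imports it from \cite{Jonker}, so quoting it is exactly what the paper does. Your pointwise step is correct: a linear space of semidefinite forms is at most one-dimensional. So is the Codazzi computation giving $\nabla^\perp\eta=0$ on $U_2$.

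The gap is the mechanism you propose for $U_2\neq\emptyset$. Propagating $P$ along nullity leaves and then invoking connectedness cannot work, because the conclusion it is meant to deliver is false for incomplete pieces. The line $\mathbb{R}\eta$ exists only where $B\neq0$. The flat set may contain open totally geodesic regions, and points of $\partial U_2$ may have $B=0$. On the rank-one part, constancy of $\eta$ along a ruling says nothing about $\nabla^\perp_X\eta$ in the conullity direction $X$. Hence rulings that do not end at a point of $\overline{U_2}$ with $B\neq0$, e.g.\ rulings issuing from a planar region, carry no information about $P$.

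Here is a concrete local example. Let $D\subset\mathbb{R}^2$ be the unit disk, and let $\psi$ be smooth and convex with $\psi=0$ on $(-\infty,0]$ and $\psi''>0$ on $(0,\infty)$. Put $F(p)=\psi(|p-(2,0)|-2.8)$ and $W(x,y)=F(-x,y)$. Then $F=0$ near $D\cap\{x\ge-\tfrac12\}$ and $W=0$ near $D\cap\{x\le\tfrac12\}$. Moreover $F$ (resp.\ $W$) has positive definite Hessian near $(-\tfrac{9}{10},0)$ (resp.\ $(\tfrac{9}{10},0)$).

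Consider the graph $X=(x,y,F,W)$ in $\mathbb{R}^4$ with coordinates $(x,y,z,w)$. For every normal $\xi$ one has $\langle B(\partial_i,\partial_j),\xi\rangle=\xi_3F_{ij}+\xi_4W_{ij}$. At each point one of $D^2F$, $D^2W$ vanishes, so every normal component of $B$ is semidefinite. Yet the piece is strictly convex in $\{w=0\}$ on the left, planar in the middle, and strictly convex in $\{z=0\}$ on the right, so it lies in no affine $3$-space. Thus completeness must be used globally, not through leafwise propagation.

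What is needed is a statement of the following kind. For $v$ orthogonal to the direction space of $P$, the height function $\langle f,v\rangle$ has Hessian $\langle B,v\rangle$, which is semidefinite at every point, and it is constant on a component of $U_2$; one must show it is constant on all of $M$. For entire graphs $(x,y,F,W)$ over $\mathbb{R}^2$ this follows from the theorems of Sacksteder and Hartman--Nirenberg:
\begin{enumerate}
\item[(a)] each $H_\theta=\cos\theta\,F+\sin\theta\,W$ is convex, concave or affine, since a flat non-affine combination is excluded by a splitting argument;
\item[(b)] the closed sets of convex and of concave parameters cover $[0,\pi]$, and each contains one endpoint because $H_\pi=-H_0$;
\item[(c)] by connectedness these two sets meet, and $H_\theta$ is affine for any $\theta$ in the intersection.
\end{enumerate}
Your sketch contains no ingredient of this global kind, so the case $U_2\neq\emptyset$ rests entirely on the citation.
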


\begin{cor}
\label{cor:general-codimension}
Let
\[
f:(M^m,g)\longrightarrow\mathbb{R}^{m+q},
\qquad m\geq 3,
\]
be a convex isometric embedding of a complete, connected, orientable, noncompact Riemannian manifold without boundary. For $p\in M$, set
\[
V_M
:=
\lim_{r\to\infty}
\frac{\operatorname{Vol}_g(B^M_p(r))}
{\omega_m r^m}.
\]
Then the finite limit
\[
L(M)
:=
\lim_{r\to\infty}
r^{2-m}
\int_{B^M_p(r)}
\operatorname{Scal}_g\,dV_g
\]
exists and is independent of $p$. Moreover,
\begin{equation}
\label{eq:general-codim-bound}
L(M)
\leq
4\pi\omega_{m-2}(1-V_M),
\end{equation}
except when, after an ambient rigid motion,
\begin{equation}
\label{eq:general-codim-exceptional}
f(M)
=
\partial D\times\mathbb{R}^{m-2}
\subset
\mathbb{R}^3\times\mathbb{R}^{m-2}
=
\mathbb{R}^{m+1}
\subset
\mathbb{R}^{m+q},
\end{equation}
where $D\subset\mathbb{R}^3$ is a compact convex set with nonempty interior and smooth boundary. In this exceptional case,
\[
V_M=0,
\qquad
L(M)=8\pi\omega_{m-2}.
\]

Equality in \eqref{eq:general-codim-bound} holds precisely in one of the following cases:
\begin{enumerate}
\item
$(M,g)$ is isometric to $\mathbb{R}^m$.

\item
After an ambient rigid motion,
\[
f(M)
=
\partial D_0\times\mathbb{R}^k
\subset
\mathbb{R}^{m+1}
\subset
\mathbb{R}^{m+q},
\]
where
\[
m-3\leq k\leq m-2
\]
and
\[
D_0\subset\mathbb{R}^{m+1-k}
\]
is a closed, noncompact convex set with nonempty interior that contains no complete affine line.
\end{enumerate}
\end{cor}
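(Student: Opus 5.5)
The plan is to apply Jonker's structure theorem (Theorem~\ref{thm:jonker-embedded}) and treat its two alternatives separately. Since $m\ge3\ge2$, $f$ falls under alternative (i) or alternative (ii).

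\emph{Alternative (ii).} Here $(M,g)$ is isometric to $\Gamma^1\times\R^{m-1}$ with the product metric, so it is flat. Hence $\operatorname{Scal}_g\equiv0$ and $L(M)=0$ for every $p$; in particular the limit exists and is basepoint independent. Since $M$ is noncompact, $\Gamma$ is a complete noncompact connected $1$-manifold, i.e.\ isometric to $\R$. Thus $(M,g)\cong\R^m$, $V_M=1$, and equality $0=4\pi\omega_{m-2}(1-1)$ holds. This is case (1) of the equality statement. There is no conflict with the exceptional family, since those products have $V_M=0$.

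\emph{Alternative (i).} Here $f(M)=\partial_P\Omega$ for an open convex $\Omega\subset P\cong\R^{m+1}$. Set $K=\overline\Omega$. Then $K$ is a closed convex subset of $P$ with nonempty interior, and its boundary is the smooth, connected, complete, noncompact hypersurface $f(M)$. Completeness and noncompactness transfer from $(M,g)$ because $f$ is an isometric embedding. The one point to check is that $\partial_P\Omega=\partial_P\overline\Omega$ for an open convex set, which is standard. The induced metric on $\partial K\subset P$ coincides with $g$, and $\operatorname{Scal}_g$ is intrinsic. Therefore Theorem~\ref{thm:main}, applied with $n=m$ after choosing an affine isometry $P\cong\R^{m+1}$, yields all the conclusions: existence, finiteness and basepoint independence of $L(M)$; the bound \eqref{eq:general-codim-bound}; the exceptional products with their values $V_M=0$, $L(M)=8\pi\omega_{m-2}$; and the equality classification. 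An ambient rigid motion of $\R^{m+q}$ carries $P$ to the standard $\R^{m+1}\subset\R^{m+q}$ and then realizes the rigid motion of $\R^{m+1}$ given by Theorem~\ref{thm:main}. This produces the stated forms $\partial D\times\R^{m-2}\subset\R^3\times\R^{m-2}$ and $\partial D_0\times\R^k$.

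For the converse directions of the equality statement, two cases arise. If $(M,g)\cong\R^m$, equality holds trivially: $L=0$ and $V_M=1$. If $f(M)=\partial D_0\times\R^k$ with the stated properties, then $f(M)$ is the boundary of the convex set $D_0\times\R^k$ in $\R^{m+1}$, and Proposition~\ref{prop:equality-geometric} applies. Likewise, a manifold of the form \eqref{eq:general-codim-exceptional} lies in an $(m+1)$-plane, so Proposition~\ref{prop:product-limit} gives its values.

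The only mildly delicate point is the reduction in alternative (i), namely verifying the hypotheses of Theorem~\ref{thm:main}. The other alternative is immediate once noncompactness forces $\Gamma\cong\R$.
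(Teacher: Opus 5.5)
Your reduction in alternative (i) is correct and agrees with the paper. The treatment of alternative (ii), however, contains a false step.

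Noncompactness of $M\cong\Gamma\times\R^{m-1}$ is automatic, since $m-1\ge2$. So it does not force $\Gamma\cong\R$: $\Gamma$ may be a circle, with $\gamma$ any embedded closed curve in $\R^{q+1}$. Such cylinders satisfy every hypothesis of the corollary. Their second fundamental form $B(X,X)=\langle X,\partial_s\rangle^2\gamma''(s)$ has rank at most one, so it is automatically semidefinite in every normal direction. If $\gamma$ is, for instance, a nonconvex planar closed curve or a knotted space curve, alternative (i) fails, and your argument says nothing correct about these examples. For them $(M,g)$ is flat and $L(M)=0$. But $\Vol_g(B_p^M(r))\le \ell(\Gamma)\,\omega_{m-1}r^{m-1}$, so $V_M=0$ and the inequality is strict. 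Hence your claim that $V_M=1$ and equality holds throughout alternative (ii) is false, and this subcase is not handled.

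The repair is the route the paper takes: split according to whether $M$ is flat, not by Jonker's alternatives. If $M$ is flat, then $L(M)=0$ trivially and Bishop--Gromov gives $V_M\in[0,1]$, so the bound holds. Equality holds iff $V_M=1$, which by the rigidity case of Bishop--Gromov is equivalent to $(M,g)\cong\R^m$. Within alternative (ii) you can also see this directly, since $V_M=1$ rules out $\Gamma\cong S^1$. If $M$ is not flat, alternative (ii) is excluded, and your alternative (i) argument applies verbatim. Alternatively, keep your split and add the subcase $\Gamma\cong S^1$ using the volume estimate above.
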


\begin{proof}
For every orthonormal pair $X,Y\in T_xM$, choose an orthonormal basis
$\{\xi_\alpha\}$ of the normal space $N_xM$. The Gauss equation gives
\[
K_M(X,Y)
=
\sum_\alpha
\left(
\langle B(X,X),\xi_\alpha\rangle
\langle B(Y,Y),\xi_\alpha\rangle
-
\langle B(X,Y),\xi_\alpha\rangle^2
\right).
\]
For each $\alpha$, the symmetric bilinear form
\[
(X,Y)\longmapsto \langle B(X,Y),\xi_\alpha\rangle
\]
is semidefinite by convexity. Therefore $K_M\geq 0$.
In particular, $\operatorname{Ric}_g\geq0$, and Bishop--Gromov volume comparison yields $V_M\in[0,1]$.

Suppose first that $M$ is flat. Then
\[
\operatorname{Scal}_g\equiv0,\qquad L(M)=0,
\]
so \eqref{eq:general-codim-bound} follows immediately. Equality holds if and only if
$V_M=1$. By the equality case of Bishop--Gromov comparison, this is equivalent to
\[
(M,g)\cong\mathbb{R}^m.
\]

Now suppose that $M$ is not flat. Alternative (ii) of
Theorem~\ref{thm:jonker-embedded} is impossible, hence alternative (i) holds. Thus there exist an affine subspace $P\cong\mathbb{R}^{m+1}$ and a nonempty open convex set $\Omega\subset P$ such that $f(M)=\partial_P\Omega$. 

Since $f$ is an isometric embedding, $f(M)$ is a smooth, connected, complete, noncompact convex hypersurface in $P$. Set
\[
K=\overline{\Omega}\subset P.
\]
Then $K$ is a closed convex set with nonempty interior and $\partial_P K=f(M)$. The embedding identifies the intrinsic metric on $\partial_P K$ with $g$.

Applying Theorem~\ref{thm:main} in the affine Euclidean space
$P\cong\mathbb{R}^{m+1}$, with $n=m$, gives the existence, finiteness, and basepoint independence of $L(M)$, as well as
\[
L(M)
\leq
4\pi\omega_{m-2}(1-V_M),
\]
except for the case
\[
M=\partial D\times\mathbb{R}^{m-2},
\]
where $D\subset\mathbb{R}^3$ is a compact convex body with smooth boundary. In this exceptional case, Theorem~\ref{thm:main} gives
\[
V_M=0,
\qquad
L(M)=8\pi\omega_{m-2}.
\]

And the equality case is the same as Theorem~\ref{thm:main}.
\end{proof}


\begin{bibdiv}
\begin{biblist}

\bib{AmelunxenLotz2017}{article}{
  author  = {Amelunxen, Dennis},
  author  = {Lotz, Martin},
  title   = {Intrinsic Volumes of Polyhedral Cones: A Combinatorial Perspective},
  journal = {Discrete \& Computational Geometry},
  volume  = {58},
  number  = {2},
  pages   = {371--409},
  year    = {2017},
  doi     = {10.1007/s00454-017-9904-9}
}

\bib{ALMT}{article}{
  author  = {Amelunxen, Dennis},
  author  = {Lotz, Martin},
  author  = {McCoy, Michael B.},
  author  = {Tropp, Joel A.},
  title   = {Living on the Edge: Phase Transitions in Convex Programs with Random Data},
  journal = {Information and Inference: A Journal of the IMA},
  volume  = {3},
  number  = {3},
  pages   = {224--294},
  year    = {2014},
  doi     = {10.1093/imaiai/iau005}
}

\bib{Cheng2026}{article}{
  author  = {Cheng, Haoxuan},
  title   = {Unbounded Normalized Scalar Curvature Integrals in Dimension Four},
  year    = {2026},
  eprint  = {arXiv:2609.10323}
}

\bib{CohnVossen1935}{article}{
  author  = {Cohn-Vossen, Stefan},
  title   = {K{\"u}rzeste Wege und Totalkr{\"u}mmung auf Fl{\"a}chen},
  journal = {Compositio Mathematica},
  volume  = {2},
  pages   = {69--133},
  year    = {1935}
}

\bib{Gomes1983}{article}{
  author  = {de Miranda Gomes, J.},
  title   = {On Isometric Immersions with Semi-Definite Second Quadratic Forms},
  journal = {Anais da Academia Brasileira de Ci\^encias},
  volume  = {55},
  number  = {2},
  pages   = {145--146},
  year    = {1983}
}

\bib{Deng2026}{article}{
  author  = {Deng, Jialong},
  title   = {Cohn--Vossen-Type Inequalities for Three-Manifolds and Locally Conformally Flat Manifolds},
  year    = {2026},
  eprint  = {arXiv:2606.01368}
}

\bib{HaoZhu2026}{article}{
  author  = {Hao, Tianze},
  author  = {Zhu, Jintian},
  title   = {A Counterexample to Yau's Conjectured Asymptotic Scalar-Curvature Integral Bound},
  year    = {2026},
  eprint  = {arXiv:2609.06533}
}

\bib{Hartman1964}{article}{
  author  = {Hartman, Philip},
  title   = {Geodesic Parallel Coordinates in the Large},
  journal = {American Journal of Mathematics},
  volume  = {86},
  number  = {4},
  pages   = {705--727},
  year    = {1964},
  doi     = {10.2307/2373154}
}

\bib{Hu2026}{article}{
  author  = {Hu, Qixuan},
  title   = {Scalar Curvature Growth on Nonnegatively Curved Three-Manifolds},
  year    = {2026},
  eprint  = {arXiv:2609.26717},
  url     = {https://arxiv.org/abs/2609.26717}
}

\bib{Jonker}{article}{
  author  = {Jonker, L. B.},
  title   = {Immersions with Semi-Definite Second Fundamental Forms},
  journal = {Canadian Journal of Mathematics},
  volume  = {27},
  number  = {3},
  pages   = {610--617},
  year    = {1975},
  doi     = {10.4153/CJM-1975-071-9}
}

\bib{Ma2026}{article}{
  author  = {Ma, Shiguang},
  title   = {Cohn--Vossen Theory for Locally Conformally Flat Manifolds},
  year    = {2026},
  eprint  = {arXiv:2606.13979}
}

\bib{McCoyTropp}{article}{
  author  = {McCoy, Michael B.},
  author  = {Tropp, Joel A.},
  title   = {From Steiner Formulas for Cones to Concentration of Intrinsic Volumes},
  journal = {Discrete \& Computational Geometry},
  volume  = {51},
  number  = {4},
  pages   = {926--963},
  year    = {2014},
  doi     = {10.1007/s00454-014-9595-4}
}

\bib{MullerSverak1995}{article}{
  author  = {M\"uller, Stefan},
  author  = {\v{S}ver\'ak, Vladim\'ir},
  title   = {On Surfaces of Finite Total Curvature},
  journal = {Journal of Differential Geometry},
  volume  = {42},
  number  = {2},
  pages   = {229--258},
  year    = {1995},
  doi     = {10.4310/jdg/1214457233}
}

\bib{MunteanuWang}{article}{
  author  = {Munteanu, Ovidiu},
  author  = {Wang, Jiaping},
  title   = {Sharp Integral Bound of Scalar Curvature on $3$-Manifolds},
  year    = {2025},
  eprint  = {arXiv:2505.10520}
}

\bib{Petersen}{book}{
  author    = {Petersen, Peter},
  title     = {Riemannian Geometry},
  edition   = {3},
  series    = {Graduate Texts in Mathematics},
  volume    = {171},
  publisher = {Springer},
  address   = {Cham},
  year      = {2016}
}

\bib{Sacksteder1960}{article}{
  author  = {Sacksteder, Richard},
  title   = {On Hypersurfaces with No Negative Sectional Curvatures},
  journal = {American Journal of Mathematics},
  volume  = {82},
  number  = {3},
  pages   = {609--630},
  year    = {1960},
  doi     = {10.2307/2372973}
}

\bib{SchneiderCones2022}{book}{
  author    = {Schneider, Rolf},
  title     = {Convex Cones: Geometry and Probability},
  series    = {Lecture Notes in Mathematics},
  volume    = {2319},
  publisher = {Springer},
  address   = {Cham},
  year      = {2022},
  doi       = {10.1007/978-3-031-15127-9}
}

\bib{Xu2020}{article}{
  author  = {Xu, Guoyi},
  title   = {Integral of Scalar Curvature on Non-Parabolic Manifolds},
  journal = {Journal of Geometric Analysis},
  volume  = {30},
  number  = {1},
  pages   = {901--909},
  year    = {2020},
  doi     = {10.1007/s12220-019-00174-7}
}

\bib{Xu2024}{article}{
  author  = {Xu, Guoyi},
  title   = {Integral of Scalar Curvature on Manifolds with a Pole},
  journal = {Proceedings of the American Mathematical Society},
  volume  = {152},
  number  = {11},
  pages   = {4865--4872},
  year    = {2024},
  doi     = {10.1090/proc/16584}
}

\bib{XuCollapse2026}{article}{
  author  = {Xu, Guoyi},
  title   = {Integral of Scalar Curvature under Volume Collapse},
  year    = {2026},
  eprint  = {arXiv:2609.10160}
}

\bib{Xu2026Sharp}{article}{
  author  = {Xu, Guoyi},
  title   = {Integral of Scalar Curvature: Sharp Asymptotic Bounds and Rigidity},
  year    = {2026},
  eprint  = {arXiv:2609.26357}
}

\bib{Yau1992}{incollection}{
  author    = {Yau, Shing-Tung},
  title     = {Open Problems in Geometry},
  booktitle = {Shiing-Shen Chern---A Great Geometer of the Twentieth Century},
  editor    = {Yau, Shing-Tung},
  publisher = {International Press},
  address   = {Hong Kong},
  pages     = {275--319},
  year      = {1992}
}
\end{biblist}
\end{bibdiv}
\end{document}